\def\qed{{\hbadness=10000\hfill\ \vbox{\hrule height.09ex
     \hbox{\vrule width.09ex height1.55ex depth.2ex \kern1.8ex
     \vrule width.09ex height1.55ex depth.2ex}\hrule height.09ex}\break
     \bigskip}}
\newtheorem{thm}{Theorem}[section]
\newtheorem{prop}[thm]{Proposition}
\newtheorem{cor}[thm]{Corollary}
\newtheorem{lem}[thm]{Lemma}
\newtheorem{defn}[thm]{Definition}
\newtheorem{rem}[thm]{Remark}
\numberwithin{equation}{section}
\newtheorem{example}[thm]{Example}
\begin{document}
%\runningtitle{Some Characterizations of a Normal Subgroup}

\title{Solvable and Nilpotent Right Loops}

%\rhead{Some Char. Normal Subgroup and Iso. Cls. Tran.}
%\lhead{}
\author{Vivek Kumar Jain*
~ and Vipul Kakkar**\\
*Department of Mathematics, Central University of Bihar \\
Patna (India) 800014\\
**School of Mathematics, Harish-Chandra Research Institute\\
Allahabad (India) 211019\\
Email: jaijinenedra@gmail.com; vplkakkar@gmail.com}

\date{}
\maketitle
%\begin{document}

%\footnotetext{The first author is supported by CSIR, Government of India.}

\begin{abstract}
In this paper the notion of nilpotent right transversal and solvable right transversal has been defined. Further, it is proved that if a core-free subgroup has a generating solvable transversal or a generating nilpotent transversal, then the whole group is solvable.  
\end{abstract}

%% - subject classification and keywords
%% 2010 American Mathematical Society Subject Classification
%% Provide only ONE primary classification

%% Four or five keywords or phrases
\noindent \textbf{Keywords:} Right loop, Right Transversal, Solvable Right Loop, Nilpotent Right Loop
\\
\noindent \textbf{2000 Mathematics subject classification:} 20D60; 20N05

\section{Introduction}
Transversals play an important role in characterizing the group and the embedding of subgroup in the group. In \cite{tm}, Tarski monsters has been characterized with the help of transversals.  In \cite{p}, Lal shows that if all subgroups of a finitely generated solvable group are stable (any right transversals of a subgroup have isomorphic group torsion), then the group is nilpotent. In \cite{ict2}, it has been shown that if the isomorphism classes of transversals of a subgroup in a finite group is $3$, then the subgroup itself has index $3$. Converse of this fact is also true.

%In \cite{rltr}, it has been shown that each right transversal is a right loop with respect to the operation induced by the operation of the group. Let us see how. 

Let $G$ be a group and $H$ a proper subgroup of $G$. A {\textit{right transversal}} is a subset of $G$ obtained by selecting one and only one element from each right coset of $H$ in $G$ and identity from the coset $H$. Now we will call it {\textit {transversal}} in place of right transversal. Suppose that $S$ is a transversal of $H$ in $G$. We define an operation $\circ $ on $S$ as follows: for $x,y \in S$, $\{x \circ y \}:=S \cap Hxy$.
It is easy to check that $(S, \circ )$ is a  right loop, that is the equation of the type $X \circ a=b$, $X$ is unknown and $a,b \in S$ has unique solution in $S$ and $(S, \circ)$ has both sided identity.
 In \cite{rltr}, it has been shown that for each right loop there exists a pair $(G,H)$ such that $H$ is a core-free subgroup of the group $G$ and the given right loop can be identified with a tansversal of $H$ in $G$. Not all transversals of a subgroup generate the group. But it is proved by Cameron in \cite{pjc}, that if a subgroup is core-free, then always there exists a transversal which generates the whole group. We call such a transversal as {\it generating transversal}.

Let $(S, \circ)$ be a right loop (identity denoted by $1$). Let $x,y,z \in S$. Define a map $f^S(y,z)$ from $S$ to $S$ as follows: $f^S(y,z)(x)$ is the unique solution of the equation $X \circ (y \circ z )=(x \circ y ) \circ z$, where $X$ is unknown. It is easy to verify that $f^S(y,z)$ is a bijective map. For a set $X$, let $Sym(X)$ denote the symmetric group on $X$. We denote by $G_S$ the subgroup of $Sym(S)$, generated by the set $\{f^S(y,z) \mid y,z \in S \}$. This group is called {\textit{group torsion}} of $S$ \cite[Definition 3.1, p. 75]{rltr}. It measures the deviation of a right loop from being a group. Our convention for the product in
the symmetric group $Sym(S)$ is given as $(rs)(x) =s(r(x))$ for $r, s \in Sym(S)$ and $x \in S$. Further, the group $G_S$  acts on $S$ through the action $\theta^S$ defined as: for $x \in S$ and $h \in G_S$; $x \theta^S h := h(x)$. Also note that right multiplication by an element of $S$ gives a bijective map from $S$ to $S$, that is an element of $Sym(S)$. 
The subgroup generated by this type of elements in $Sym(S)$ is denoted by $G_SS$ because $G_S$ is a subgroup of it and the right multiplication map associated with the elements of $S$ form a transversal of $G_S$ in $G_SS$. Note that if $H$ is a core-free subgroup of a group $G$ and $S$ is a generating transversal of $H$ in $G$, then $G\cong G_SS$ such that $H \cong G_S$ \cite[Proposition 3.10, p. 80]{rltr}.

A non-empty subset $T$ of right loop $S$ is called a \textit{right subloop} of $S$, if it is right loop with respect to induced binary operation on $T$ (see \cite[Definition 2.1, p. 2683]{rps}). An equivalence relation $R$ on a right loop $S$ is called a congruence in $S$, if it is a sub right loop of $S \times S$. 
Also an \textit{invariant right subloop} of a right loop $S$ is precisely the equivalence class of the identity of a congruence in $S$ (\cite[Definition 2.8, p. 2689]{rps}).
% Suppose that $I$ is an invariant sub right loop of a right loop $(S, \circ)$. Then $R=\{(x \circ y, y ) \mid x \in I, ~ \text ~ y \in S\}$ is a congruence in $S$ (for details see \cite[Theorem 2.7]{rps}) such that  the equivalence class of identity $R_1=I$. 
It is observed in the proof of \cite[Proposition 2.10, p. 2690]{rps} that if $T$ is an invariant right subloop of $S$, then the set $S/T=\{T \circ x|x \in S\}$ becomes right loop called as \textit{quotient of S mod T}. Let $R$ be the congruence associated to an invariant right subloop $T$ of $S$. Then we also denote $S/T$ by $S/R$. 
%On can observe that the map $\nu:S\rightarrow S/T$ defined by $\nu(x)=T \circ x$ is a right loop homomorphism, that is $\nu (x \circ y)=\nu (x) \circ \nu (y)$ for all $x,y \in S$. It is also observed in this paper that $\nu:S\rightarrow S/T$ induces an onto group homomorphism $\theta:G_S\rightarrow G_{S/T}$ defined by $f^S(y,z) \mapsto f^{S/T}(T \circ y, T \circ z)$ (see the discussion following \cite[Lemma 2.5, p.2684]{rps}). One can note that the kernel $Ker \theta =\{h \in G_S| T \circ h(x)= T \circ x, \text{for~all~} x \in S\}$. One also notes that $G_S/Ker \theta \cong G_{S/T}$.

\section{Some Properties of Right Loop}
In this section, we will recall some basic facts about right loops and also prove some of the results which will be used in next sections. An invariant right subloop is playing the same role in the theory of right loops as the normal subgroup is playing in the theory of groups. 
    
%%%%%%%%%%%%%%%%%%%%%%%%%%%%
The following Lemma is a part of \cite[Theorem 2.7, p. 2686]{rps}.

\begin{lem}\label{rps1}
Let $S$ be a right loop. A  right subloop $T\neq \{ 1\}$ is invariant if and only if $R=\{(x \circ y, y ) \mid x \in T, ~ \text ~ y \in S\}$ is a congruence in $S$.
\end{lem}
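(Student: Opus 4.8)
The plan is to prove both implications by identifying $R$ with the congruence provided by the definition of an invariant right subloop, using that $S\times S$ is a right loop under the componentwise operation and that a right subloop is closed under solving equations of the form $X\circ a=b$.

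For the ``only if'' direction, assume $T$ is invariant. By the definition recalled above there is a congruence $R'$ in $S$ whose class of the identity, which I denote $[1]_{R'}$, equals $T$. I would show $R=R'$. For the inclusion $R\subseteq R'$: given $x\in T$ and $y\in S$, we have $(x,1)\in R'$ (since $x\in[1]_{R'}$) and $(y,y)\in R'$ (reflexivity); as $R'$ is a right subloop of $S\times S$, the product $(x,1)\circ(y,y)=(x\circ y,\,1\circ y)=(x\circ y,y)$ lies in $R'$. For the reverse inclusion $R'\subseteq R$: given $(a,b)\in R'$, consider the equation $X\circ(b,b)=(a,b)$ in $S\times S$; its unique solution $X=(x,z)$ satisfies $x\circ b=a$ and $z\circ b=b$, the latter forcing $z=1$. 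Since $(a,b),(b,b)\in R'$ and $R'$ is a right subloop, this solution $X$ lies in $R'$, so $(x,1)\in R'$, i.e. $x\in[1]_{R'}=T$; hence $(a,b)=(x\circ b,b)\in R$. Thus $R=R'$, which is a congruence.

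For the ``if'' direction, assume $R$ is a congruence in $S$. Then $[1]_R$ is an invariant right subloop, so it suffices to check $[1]_R=T$. If $s\in[1]_R$, then $(s,1)\in R$, so $s=x\circ y$ and $y=1$ for some $x\in T$; hence $s=x\circ 1=x\in T$. Conversely, for $x\in T$ we have $(x,1)=(x\circ 1,1)\in R$, so $x\in[1]_R$. Therefore $[1]_R=T$, and $T$ is invariant.

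I expect the main obstacle to be the reverse inclusion $R'\subseteq R$ in the ``only if'' direction: from an arbitrary $R'$-related pair $(a,b)$ one must produce an element $x\in T$ with $x\circ b=a$, and this is exactly where the right subloop closure of $R'$ under solving $X\circ(b,b)=(a,b)$, together with the observation that the second coordinate of the solution must be $1$, does the work. Minor care is also needed to note that the identity of a right subloop agrees with $1\in S$ and that, absent invariance, $R$ need not be an equivalence relation at all — so the content of the lemma is precisely that ``$R$ is a congruence'' encodes invariance; both points are handled by the structure theory recalled in this section.
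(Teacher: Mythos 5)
Your proof is correct. Note first that the paper itself offers no argument for this lemma; it is quoted as ``a part of [Theorem 2.7]'' of Shukla's paper on congruences in right quasigroups, so there is no internal proof to compare against. Your derivation works entirely from the definitions recalled in Section~1 (a congruence is an equivalence relation that is a right subloop of $S\times S$; an invariant right subloop is the identity class of a congruence), and both directions are sound. The two delicate points are handled properly: in the inclusion $R'\subseteq R$ you correctly exploit that the unique solution of $X\circ(b,b)=(a,b)$ in $S\times S$ must already lie in the right subloop $R'$, and that its second coordinate is forced to be $1$ by uniqueness of the solution of $Z\circ b=b$; and you correctly observe that the identity of a right subloop coincides with $1\in S$ (again by uniqueness of the solution of $X\circ e=e$). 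One cosmetic remark: the hypothesis $T\neq\{1\}$ is never used in your argument, and indeed is not needed --- for $T=\{1\}$ the relation $R$ degenerates to the diagonal $\Delta_S$, which is a congruence, and $\{1\}$ is trivially the identity class of it; the exclusion is inherited from the source being cited rather than forced by the mathematics.
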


The discussion following \cite[Lemma 2.5, p. 2684]{rps} shows the following result.
\begin{lem} \label{lem}
An onto homomorphism $\rho : S \rightarrow S^{\prime}$ of right loops defines an onto group homomorphism from $G_S$ to $G_{S^{\prime}}$ given by $f^S(y,z) \mapsto f^{S^{\prime}}(\rho(y),\rho(z))$. 
\end{lem}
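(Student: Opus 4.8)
The plan is to work with words in the generators $f^S(y,z)^{\pm 1}$ of $G_S$, push each such word through $\rho$ pointwise, and invoke surjectivity of $\rho$ only at the end to see that the relations of $G_S$ are respected. First I would record the basic compatibility. Applying $\rho$ to the defining equation $f^S(y,z)(x)\circ(y\circ z)=(x\circ y)\circ z$ and using that $\rho$ is a homomorphism of right loops gives
\[
\rho\big(f^S(y,z)(x)\big)\circ\big(\rho(y)\circ\rho(z)\big)=\big(\rho(x)\circ\rho(y)\big)\circ\rho(z),
\]
so by the very definition of $f^{S'}$ we obtain $\rho\big(f^S(y,z)(x)\big)=f^{S'}(\rho(y),\rho(z))\big(\rho(x)\big)$ for all $x,y,z\in S$. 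Thus each generator $g=f^S(y,z)$ of $G_S$ and the corresponding generator $g'=f^{S'}(\rho(y),\rho(z))$ of $G_{S'}$ satisfy $\rho\circ g=g'\circ\rho$ as maps $S\to S'$.

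Next I would extend this compatibility to inverses and then to arbitrary words. If $g(u)=x$ then $\rho(x)=g'(\rho(u))$, hence $(g')^{-1}(\rho(x))=\rho(u)=\rho(g^{-1}(x))$, so $\rho\circ g^{\varepsilon}=(g')^{\varepsilon}\circ\rho$ for $\varepsilon=\pm 1$. A straightforward induction on length, using the composition convention $(rs)(x)=s(r(x))$, then yields: for any word $w=g_1g_2\cdots g_n$ in the generators of $G_S$ and their inverses, writing $w'=g_1'g_2'\cdots g_n'$ for the word obtained by replacing each $g_i=f^S(y_i,z_i)^{\varepsilon_i}$ with $g_i'=f^{S'}(\rho(y_i),\rho(z_i))^{\varepsilon_i}$, one has
\[
\rho\big(w(x)\big)=w'\big(\rho(x)\big)\qquad\text{for all }x\in S,
\]
and $w'$ visibly represents an element of $G_{S'}$.

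The crux, which is the well-definedness of $w\mapsto w'$ on $G_S$, now follows from surjectivity of $\rho$: if two words $w,v$ represent the same element of $G_S$, that is $w=v$ in $Sym(S)$, then for each $x'\in S'$ pick $x\in S$ with $\rho(x)=x'$ and compute $w'(x')=\rho(w(x))=\rho(v(x))=v'(x')$; hence $w'=v'$ in $Sym(S')$. This produces a well-defined map $\Phi\colon G_S\to G_{S'}$ with $\Phi\big(f^S(y,z)\big)=f^{S'}(\rho(y),\rho(z))$. Since concatenation of words corresponds to the group operation in both $G_S$ and $G_{S'}$, $\Phi$ is a homomorphism, and it is onto because every generator $f^{S'}(y',z')$ of $G_{S'}$ equals $\Phi\big(f^S(y,z)\big)$ for any $y,z\in S$ with $\rho(y)=y'$ and $\rho(z)=z'$.

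The only real obstacle is precisely this well-definedness step: a priori a nontrivial relation $g_1\cdots g_n=\mathrm{id}$ among the generators of $G_S$ could map to a word $g_1'\cdots g_n'$ that is not the identity in $G_{S'}$, and the pointwise identity $\rho(w(x))=w'(\rho(x))$ together with $\rho$ being onto is exactly what rules this out; everything else is routine unwinding of the definitions of $f^S(y,z)$ and of the composition convention in $Sym(S)$. (Equivalently, one can check that the subgroup of $G_S$ consisting of permutations which, together with their inverses, respect the fibres of $\rho$ contains every generator $f^S(y,z)$, hence equals $G_S$, and then set $\Phi(h)(\rho(x)):=\rho(h(x))$; this is the same argument organised slightly differently.)
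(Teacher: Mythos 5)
Your proof is correct. The paper does not actually prove this lemma itself (it delegates it to the discussion following Lemma 2.5 of \cite{rps}), and your argument --- deriving the intertwining identity $\rho\circ f^S(y,z)=f^{S^{\prime}}(\rho(y),\rho(z))\circ\rho$ from the defining equation of $f^S(y,z)$ and the uniqueness of solutions in the right loop $S^{\prime}$, extending it to inverses and words, and then using surjectivity of $\rho$ both to establish well-definedness on relations and to get surjectivity of the induced map --- is exactly the standard argument, with the one genuinely nontrivial point (well-definedness) correctly identified and closed.
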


Suppose that $T$ is an invariant right subloop of $S$. It is easy to observe that the map $\nu:S\rightarrow S/T$ defined by $\nu(x)=T \circ x$ is a onto right loop homomorphism, that is $\nu (x \circ y)=\nu (x) \circ \nu (y)$ for all $x,y \in S$. By Lemma \ref{lem}, $\nu:S\rightarrow S/T$ induces an onto group homomorphism $\theta:G_S\rightarrow G_{S/T}$ defined by $f^S(y,z) \mapsto f^{S/T}(T \circ y, T \circ z)$. One can note that the kernel $Ker \theta =\{h \in G_S| T \circ h(x)= T \circ x, ~\text{for~all~} x \in S\}$. One also notes that $G_S/Ker \theta \cong G_{S/T}$.

\begin{lem}\label{prop1}
 Let $\rho$ be an onto homomorphism from a right loop $S$ to a right loop $S^{\prime}$. Let $T$ be an invariant right subloop of $S$. Then $\rho(T)$ is an invariant right subloop of $S^{\prime}$.
 \end{lem}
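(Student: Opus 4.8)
The plan is to use Lemma~\ref{rps1} to translate invariance of $\rho(T)$ into the assertion that a suitable relation on $S'$ is a congruence, and to produce that relation by pushing forward, along $\rho\times\rho$, the congruence on $S$ associated to $T$. I may assume $T\neq\{1\}$, since otherwise $\rho(T)=\{1\}$ and the claim is trivial. Let $R=\{(x\circ y,\,y)\mid x\in T,\ y\in S\}$; by Lemma~\ref{rps1} this is a congruence in $S$, and in particular it is a right subloop of $S\times S$.

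First I would note that $\rho\times\rho\colon S\times S\to S'\times S'$ is an onto homomorphism of right loops: it is onto because $\rho$ is, and it is a homomorphism because the operation on a product of right loops is componentwise. Hence the image $R':=(\rho\times\rho)(R)$ is a right subloop of $S'\times S'$. Since $\rho$ is onto and a homomorphism, as $(x,y)$ ranges over $T\times S$ the pair $(\rho(x),\rho(y))$ ranges over all of $\rho(T)\times S'$, so
\[
R'=\{(x'\circ y',\,y')\mid x'\in\rho(T),\ y'\in S'\}.
\]
If I can show that $R'$ is an equivalence relation on $S'$, then $R'$ is a congruence in $S'$; and because the $R'$-class of $1$ equals $\{x'\circ 1\mid x'\in\rho(T)\}=\rho(T)$, it will follow from the characterization of invariant right subloops as identity classes of congruences that $\rho(T)$ is an invariant right subloop of $S'$.

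The one genuine obstacle is verifying that $R'$ is an equivalence relation: transitivity of a homomorphic image of an equivalence relation is not automatic, so this step must exploit the structure of $R$ together with a careful choice of lifts. Reflexivity is clear since $1\in T$ forces $1\in\rho(T)$. For symmetry, given $(a',b')\in R'$ write $a'=x'\circ b'$ with $x'\in\rho(T)$, pick $x\in T$ with $\rho(x)=x'$ and $b\in S$ with $\rho(b)=b'$, and set $a=x\circ b$; then $(a,b)\in R$ and $\rho\times\rho$ carries $(a,b)$ to $(a',b')$. Symmetry of $R$ then gives $b=z\circ a$ with $z\in T$, hence $b'=\rho(z)\circ a'$ with $\rho(z)\in\rho(T)$, i.e.\ $(b',a')\in R'$. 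For transitivity, given $(a',b'),(b',c')\in R'$, I would first choose a lift $c\in S$ of $c'$, then $y\in T$ with $b'=\rho(y)\circ c'$, and put $b=y\circ c$ so that $(b,c)\in R$ and $\rho(b)=b'$; next I lift $(a',b')$ as before but using this particular $b$, obtaining $(a,b)\in R$ with $\rho(a)=a'$. Transitivity of $R$ gives $(a,c)\in R$, so $a=w\circ c$ for some $w\in T$, whence $a'=\rho(w)\circ c'$ and $(a',c')\in R'$. Apart from this, the argument uses only the routine facts that a homomorphic image of a right subloop is a right subloop and that the identity of $S$ lies in every right subloop of $S$.
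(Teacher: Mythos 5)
Your proof is correct and takes essentially the same route as the paper: both push the congruence $R=\{(x\circ y,\,y)\mid x\in T,\ y\in S\}$ of Lemma~\ref{rps1} forward along $\rho$ and identify $\rho(T)$ as the equivalence class of the identity of the resulting relation on $S^{\prime}$. You additionally spell out, via carefully chosen lifts, why the pushed-forward relation is an equivalence relation (in particular transitive), a verification the paper's proof asserts without detail.
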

\begin{proof}
Since $\rho : S \rightarrow S^{\prime}$ is a homomorphism of right loops, for $x,y,z \in S$ 
\begin{equation}\label{eqq1} 
\rho((x \circ y) \circ z)=\rho(x \theta^S f^S(y,z) \circ (y \circ z))=\rho(x \theta^S f^S(y,z)) \circ (\rho(y) \circ \rho(z)).
\end{equation}
Again,
\begin{equation}\label{eq2}
\rho((x \circ y) \circ z)=(\rho(x) \circ \rho(y)) \circ \rho(z)=\rho(x) \theta^{S^{\prime}} f^{S^{\prime}}(\rho(y),\rho(z)) \circ (\rho(y) \circ \rho(z)).
\end{equation}
 Since $S^{\prime}$ is a right loop, Equation \ref{eqq1} and Equation \ref{eq2} implies
 \begin{equation}\label{eq3}
 \rho(x \theta^S f^S(y,z))=\rho(x) \theta^{S^{\prime}} f^{S^{\prime}}(\rho(y),\rho(z)).
 \end{equation}
 
 Suppose that $T$ is an invariant subright loop of $S$. Then by Lemma \ref{rps1}, $R=\{(x \circ y, y ) \mid x \in T, ~ \text ~ y \in S\}$ is a congruence in $S$. Since $\rho$ is an onto homomorphism satisfying Equation \ref{eq3}, so the set $\{ (\rho (x) \circ y, y ) \mid x \in T, ~ \text ~ y \in S^{\prime}\}$ is a congruence in $S^{\prime}$. Thus by Lemma \ref{rps1}, $\rho(T)$ is an invariant subright loop of $S^{\prime}$. 
   \end{proof}

\begin{lem}\label{lemm}
Let $f$ be a right loop homomorphism from a right loop $S_1$ to a right loop $S_2$. Let $T$ be an invariant right subloop of $S_2$. Then $f^{-1}(T)$ is an invariant right subloop of $S_1$.
\end{lem}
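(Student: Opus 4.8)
The plan is to show that $f^{-1}(T)$ is a right subloop of $S_1$ and then verify the invariance criterion of Lemma~\ref{rps1}, namely that $R = \{(x \circ y, y) \mid x \in f^{-1}(T),\ y \in S_1\}$ is a congruence in $S_1$. The fact that $f^{-1}(T)$ is a right subloop should be routine: it contains $1$ since $f(1)=1 \in T$, and it is closed under the induced operation and the loop division because $T$ is, using that $f$ is a homomorphism (so $f(x\circ y) = f(x)\circ f(y)$, and the unique solution of $X \circ a = b$ in $S_1$ maps under $f$ to the unique solution of $X \circ f(a) = f(b)$ in $S_2$, which lies in $T$ when $a,b \in f^{-1}(T)$).

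**The main work** is the invariance. The approach is to pull back the congruence on $S_2$ associated to $T$. By Lemma~\ref{rps1}, $R_2 = \{(u\circ v, v) \mid u \in T,\ v \in S_2\}$ is a congruence in $S_2$, i.e.\ an equivalence relation that is also a right subloop of $S_2 \times S_2$. I would consider the map $f \times f : S_1 \times S_1 \to S_2 \times S_2$, which is a right loop homomorphism, and set $R_1 = (f\times f)^{-1}(R_2)$. One checks directly that $R_1$ is an equivalence relation on $S_1$ (reflexivity, symmetry, transitivity all pull back), and that $R_1$ is a right subloop of $S_1 \times S_1$ since it is the preimage of the right subloop $R_2$ under a homomorphism — this is the same preimage-of-subloop fact used for $f^{-1}(T)$ itself, now applied in $S_1 \times S_1$. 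Hence $R_1$ is a congruence in $S_1$.

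**The remaining step** is to identify $R_1$ with the relation $R = \{(x\circ y, y) \mid x \in f^{-1}(T),\ y \in S_1\}$ appearing in Lemma~\ref{rps1}, or at least to identify the $R_1$-class of $1$ with $f^{-1}(T)$. I expect the cleanest route is: the invariant right subloop corresponding to a congruence is precisely the equivalence class of $1$, so it suffices to show $\{a \in S_1 \mid (a,1) \in R_1\} = f^{-1}(T)$. Now $(a,1)\in R_1$ iff $(f(a), f(1)) = (f(a),1) \in R_2$ iff $f(a)$ lies in the $R_2$-class of $1$, which is $T$ (by the correspondence for $S_2$); that is exactly $a \in f^{-1}(T)$. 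Then invoking Lemma~\ref{rps1} in the reverse direction — a right subloop whose associated relation is a congruence is invariant — gives the conclusion.

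**The main obstacle** I anticipate is bookkeeping around the precise statement of Lemma~\ref{rps1}: it characterizes invariance of $T$ via the specific relation $R = \{(x\circ y,y)\mid x\in T, y\in S\}$ being a congruence, so I must be careful to show that the pulled-back congruence $R_1$ actually \emph{is} this relation for $T' := f^{-1}(T)$, not merely some congruence with the right identity class. This requires showing $(a,b)\in R_1 \iff a = x\circ b$ for some $x \in f^{-1}(T)$; the forward direction uses that $(f(a),f(b))\in R_2$ means $f(a) = u\circ f(b)$ for some $u\in T$, and then solving $X\circ b = a$ in $S_1$ and pushing through $f$ to see the solution lands in $f^{-1}(T)$. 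Alternatively, if that identification is delicate, I would instead cite the congruence/invariant-right-subloop correspondence more abstractly (as set up in the discussion after Lemma~\ref{lem} and in \cite{rps}) and just track the class of the identity, which sidesteps pinning down $R_1$ on the nose.
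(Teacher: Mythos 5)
Your proof is correct and follows essentially the same route as the paper: pull back the congruence $R_2$ associated to $T$ along $f\times f$, observe that the result is a congruence on $S_1$ whose class of the identity is exactly $f^{-1}(T)$, and conclude from the definition of invariant right subloop as the identity class of a congruence. The only cosmetic difference is that you verify the equivalence-relation axioms of $(f\times f)^{-1}(R_2)$ directly (which is immediate, since the preimage of an equivalence relation is one), whereas the paper gets the congruence property by citing the reflexive-subalgebra result \cite[Proposition 4.3.2]{smro}; your worry about matching $R_1$ with the explicit relation of Lemma~\ref{rps1} is unnecessary, as your fallback via the identity class is precisely what the paper does.
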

\begin{proof}
Consider a map $f \times f: S_1 \times S_1 \rightarrow S_2 \times S_2$ defined by $(x,y) \mapsto (f(x),f(y))$ where $(x,y) \in S_1 \times S_1$. Let $R$ be the congruence associated with $T$. 

For a non-empty set $X$, let $\Delta_X=\{(x,x)|x \in X\}$. Then one can check that $(f \times f)^{-1}(R)$ is a right subloop of $S_1 \times S_1$ and $\Delta_{S_1} \subseteq (f \times f)^{-1}(R)$. Hence by \cite[Proposition 4.3.2, p. 101]{smro}, $f^{-1}(T)$ is an invariant right subloop of $S_1$.
\end{proof}

Now, the following correspondance theorem is true for right loops.
\begin{prop}\label{prop2}
Let $\rho$ be an onto homomorphism from a right loop $S$ to a right loop $S^{\prime}$. Let $\psi(S)$ denote the set of all invariant right subloops of $S$ which contain the kernel of $\rho$ and $\psi(S^{\prime})$ denote the set of all invariant right subloop of $S^{\prime}$. Then $\rho$ induces a bijective map $\phi$ from $\psi(S)$ to $\psi(S^{\prime})$ defined by $\phi(T)=\rho(T)$. 
\end{prop}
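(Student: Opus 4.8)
The plan is to prove this correspondence theorem by the standard route: first establish that $\phi$ is well-defined, then produce an explicit inverse, and finally check it is a two-sided inverse. To see that $\phi$ is well-defined, take $T \in \psi(S)$, so $T$ is an invariant right subloop of $S$ containing $\operatorname{Ker}\rho$. By Lemma \ref{prop1}, $\rho(T)$ is an invariant right subloop of $S^\prime$, hence $\rho(T) \in \psi(S^\prime)$; so $\phi$ maps $\psi(S)$ into $\psi(S^\prime)$ as claimed.

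Next I would define the candidate inverse $\phi^\prime : \psi(S^\prime) \to \psi(S)$ by $\phi^\prime(T^\prime) = \rho^{-1}(T^\prime)$. By Lemma \ref{lemm}, $\rho^{-1}(T^\prime)$ is an invariant right subloop of $S$, and since $\rho(1) = 1 \in T^\prime$ we have $\operatorname{Ker}\rho = \rho^{-1}(\{1\}) \subseteq \rho^{-1}(T^\prime)$, so indeed $\phi^\prime(T^\prime) \in \psi(S)$. It then remains to verify $\phi \circ \phi^\prime = \mathrm{id}_{\psi(S^\prime)}$ and $\phi^\prime \circ \phi = \mathrm{id}_{\psi(S)}$. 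The first is the identity $\rho(\rho^{-1}(T^\prime)) = T^\prime$, which holds because $\rho$ is onto. The second amounts to showing $\rho^{-1}(\rho(T)) = T$ whenever $\operatorname{Ker}\rho \subseteq T$: the inclusion $T \subseteq \rho^{-1}(\rho(T))$ is automatic, and for the reverse, if $\rho(x) \in \rho(T)$ then $\rho(x) = \rho(t)$ for some $t \in T$; using that $\nu$-type quotients and the congruence description from Lemma \ref{rps1} relate $x$ and $t$ by an element of $\operatorname{Ker}\rho$, one concludes $x \in T$.

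The main obstacle I anticipate is precisely this last set-theoretic step $\rho^{-1}(\rho(T)) \subseteq T$, because in a right loop one cannot simply "divide": from $\rho(x) = \rho(t)$ one would like to say $x$ and $t$ differ by a kernel element, but the kernel here is an invariant right subloop, not a normal subgroup, so the argument must be phrased through the congruence $R_T = \{(u \circ y, y) \mid u \in T,\ y \in S\}$ of Lemma \ref{rps1} and the congruence $R_K$ attached to $K = \operatorname{Ker}\rho$. Concretely, $\rho(x) = \rho(t)$ means $(x,t) \in R_K$, i.e. $x = k \circ t$ for some $k \in K$ (after matching second coordinates); since $K \subseteq T$ this gives $x = k \circ t$ with $k \in T$, and because $T$ is a right subloop closed under $\circ$ with $t \in T$, we get $x \in T$. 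Care is needed to justify that membership in the congruence class genuinely has the form $x = k \circ t$, which is where Lemma \ref{rps1} and the definition of $\operatorname{Ker}\theta$ from the discussion preceding Lemma \ref{prop1} are invoked.

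Finally I would note that no compatibility with the loop structure beyond what Lemmas \ref{prop1} and \ref{lemm} already give is needed, since the statement only asserts a bijection of sets of invariant right subloops; if one wanted an order isomorphism with respect to inclusion, that follows immediately because both $\rho$ and $\rho^{-1}$ preserve inclusions. This keeps the proof short: the two functoriality lemmas do the structural work, and the only genuine verification is the kernel-containment argument above.
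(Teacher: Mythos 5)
Your proof is correct and follows what is evidently the intended route: the paper states Proposition \ref{prop2} without proof, but places Lemma \ref{prop1} (images of invariant right subloops) and Lemma \ref{lemm} (preimages) immediately before it precisely so that the argument you give works. The one step you rightly flag --- that $\rho(x)=\rho(t)$ forces $x=k\circ t$ with $k\in \operatorname{Ker}\rho$ --- is justified cleanly by Proposition \ref{prp1}, since injectivity of $\bar\rho: S/\operatorname{Ker}\rho\to S'$ says the fibers of $\rho$ are exactly the classes $\operatorname{Ker}\rho\circ t$ of the congruence from Lemma \ref{rps1}.
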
  
%%%%%%%%%%%%%%%%%%%%%%%%%%%%%% 
  The proof of the following Fundamental Theorem of homomorphism for right loops is as usual.
 \begin{prop} \label{prp1}%(\cite[Theorem 2.1.25, p. 80]{rpth})
Let $\rho: S \rightarrow S^{\prime}$ be a homomorphism of right loops. Then there exists a unique injective homomorphism $\bar{\rho}:S/Ker \rho \rightarrow S^{\prime}$ such that $\bar{\rho} \circ \nu=\rho$, where $\nu : S \rightarrow S/Ker \rho$ is the natural homomorphism.  
 \end{prop}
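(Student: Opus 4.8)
The plan is to imitate the classical construction, the only preliminary point being to make sense of $S/Ker\,\rho$. First I would observe that $K := Ker\,\rho = \rho^{-1}(1)$ is an invariant right subloop of $S$: the diagonal $\Delta_{S^{\prime}}$ is a congruence in $S^{\prime}$ whose identity class is $\{1\}$, so $\{1\}$ is an invariant right subloop of $S^{\prime}$, and Lemma \ref{lemm} then yields that $K=\rho^{-1}(\{1\})$ is an invariant right subloop of $S$. Hence $S/K$ is a right loop and the natural map $\nu:S\rightarrow S/K$, $\nu(x)=K\circ x$, is an onto right loop homomorphism. Let $R$ be the congruence associated with $K$; by Lemma \ref{rps1}, $R=\{(t\circ y,\,y)\mid t\in K,\ y\in S\}$, so $K\circ x=K\circ y$ holds precisely when $x=t\circ y$ for some $t\in K$.

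Next I would define $\bar{\rho}:S/K\rightarrow S^{\prime}$ by $\bar{\rho}(K\circ x)=\rho(x)$ and check it is well defined: if $K\circ x=K\circ y$, then $x=t\circ y$ with $t\in K$, so $\rho(x)=\rho(t)\circ\rho(y)=1\circ\rho(y)=\rho(y)$. That $\bar{\rho}$ is a homomorphism of right loops is immediate from $\nu$ being a homomorphism, since $\bar{\rho}((K\circ x)\circ(K\circ y))=\bar{\rho}(K\circ(x\circ y))=\rho(x\circ y)=\rho(x)\circ\rho(y)=\bar{\rho}(K\circ x)\circ\bar{\rho}(K\circ y)$, and the relation $\bar{\rho}\circ\nu=\rho$ holds by construction.

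For injectivity, suppose $\bar{\rho}(K\circ x)=\bar{\rho}(K\circ y)$, that is $\rho(x)=\rho(y)$. Let $t\in S$ be the unique solution of $X\circ y=x$ (it exists because $S$ is a right loop). Applying $\rho$ gives $\rho(t)\circ\rho(y)=\rho(x)=\rho(y)=1\circ\rho(y)$, and since $S^{\prime}$ is a right loop the equation $X\circ\rho(y)=\rho(y)$ has a unique solution, forcing $\rho(t)=1$, i.e. $t\in K$. Hence $x=t\circ y$ with $t\in K$, so $K\circ x=K\circ y$ and $\bar{\rho}$ is injective. Finally, uniqueness is forced by $\nu$ being onto: if $\sigma:S/K\rightarrow S^{\prime}$ satisfies $\sigma\circ\nu=\rho$, then $\sigma(K\circ x)=\sigma(\nu(x))=\rho(x)=\bar{\rho}(K\circ x)$ for every $x\in S$, so $\sigma=\bar{\rho}$.

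I expect the only genuinely delicate point to be the bookkeeping around the congruence $R$ attached to $K$ — in particular using the coset description $K\circ x=K\circ y\iff x=t\circ y$ for some $t\in K$ in exactly the directions needed for well-definedness and for injectivity. Everything else is the routine diagram chase, which is presumably why the statement is said to hold "as usual."
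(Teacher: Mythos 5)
Your proof is correct and is precisely the standard argument the paper has in mind when it says the proof is ``as usual'' (the paper omits it entirely). All the delicate points are handled properly: identifying $Ker\,\rho=\rho^{-1}(\{1\})$ as an invariant right subloop via Lemma \ref{lemm}, using the coset description $K\circ x=K\circ y\iff x=t\circ y$ for some $t\in K$ in both directions, and exploiting unique solvability of $X\circ a=b$ in the right loop $S^{\prime}$ to get injectivity.
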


%%%%%%%%%%%%%%%%%%%%%%%%%%%%  
\begin{lem}\label{gl2}
Let $(S,\circ)$ be a right loop. Let $R$ be the congruence on $S$ generated by $\{(x,x\theta^Sf^S(y,z)) \mid x,y, z \in S\}$. Then $R$ is the smallest 
congruence on $S$ such that the quotient right loop $S/R$ is a group.
\end{lem}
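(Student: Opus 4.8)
The statement has two halves: that the quotient $S/R$ is a group, and that $R$ is contained in every congruence on $S$ whose quotient is a group. Both will fall out of one elementary observation about associator maps combined with Equation \ref{eq3}.

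First I would record (or just cite, see \cite{rltr}) the fact that a right loop $Q$ is a group if and only if $f^{Q}(y,z)=\mathrm{id}_{Q}$ for all $y,z\in Q$, equivalently $G_{Q}=\{1\}$. One direction is immediate from the defining equation of $f^{Q}$: if $Q$ is associative then $x$ itself solves $X\circ(y\circ z)=(x\circ y)\circ z$, so every $f^{Q}(y,z)$ is the identity. Conversely, if all $f^{Q}(y,z)$ are trivial then $Q$ is associative, and an associative right loop is a group, since for each $a$ surjectivity of right multiplication by $a$ yields some $a'$ with $a'\circ a=1$, whence $(a\circ a')\circ a=a\circ(a'\circ a)=a\circ 1=a=1\circ a$ and right cancellation gives $a\circ a'=1$. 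I would also note that ``the congruence generated by'' a set of pairs is well defined: an arbitrary intersection of congruences on $S$ is again a congruence (an intersection of equivalence relations each of which is a right subloop of $S\times S$ is again of that kind), and $S\times S$ is itself a congruence, so a smallest congruence containing any prescribed set of pairs exists.

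Now let $\nu\colon S\to S/R$ be the natural onto homomorphism; recall from the discussion following Lemma \ref{lem} that $S/R$ is a right loop and that $(a,b)\in R$ if and only if $\nu(a)=\nu(b)$. Applying Equation \ref{eq3} to $\rho=\nu$ gives $\nu\bigl(x\,\theta^{S}f^{S}(y,z)\bigr)=\nu(x)\,\theta^{S/R}f^{S/R}(\nu(y),\nu(z))$ for all $x,y,z\in S$, and the right side equals $f^{S/R}(\nu(y),\nu(z))(\nu(x))$ by the action convention. Since $\nu$ is onto, every element of $S/R$ is some $\nu(x)$ and every generator of $G_{S/R}$ is some $f^{S/R}(\nu(y),\nu(z))$; hence, by the observation above, $S/R$ is a group $\iff$ each $f^{S/R}(\nu(y),\nu(z))$ is the identity of $Sym(S/R)$ $\iff$ $f^{S/R}(\nu(y),\nu(z))(\nu(x))=\nu(x)$ for all $x,y,z\in S$ $\iff$ (using Equation \ref{eq3} and $(a,b)\in R\Leftrightarrow\nu(a)=\nu(b)$, plus symmetry of $R$) $\bigl(x,\,x\,\theta^{S}f^{S}(y,z)\bigr)\in R$ for all $x,y,z\in S$. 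But this last condition holds by the very definition of $R$, so $S/R$ is a group. For minimality, let $R'$ be any congruence on $S$ with $S/R'$ a group; running the same chain of equivalences for the natural map $S\to S/R'$ shows that every pair $\bigl(x,\,x\,\theta^{S}f^{S}(y,z)\bigr)$ lies in $R'$, and since $R$ is the smallest congruence containing all such pairs, $R\subseteq R'$.

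The only step needing a little care is making sure Equation \ref{eq3} is genuinely applicable here --- that is, that $\nu$ really is an onto homomorphism of right loops and $S/R$ really is a right loop --- but this is exactly what is set up in the paragraph following Lemma \ref{lem}. Everything else is bookkeeping with the surjectivity of $\nu$, so I do not anticipate a real obstacle.
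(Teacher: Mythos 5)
Your argument is correct. It takes a somewhat different route from the paper's: you first isolate the characterization that a right loop $Q$ is a group if and only if every associator map $f^{Q}(y,z)$ is the identity (proving along the way that an associative right loop has two-sided inverses), and then feed the naturality identity of Equation \ref{eq3} through the surjection $\nu\colon S\to S/R$ to convert ``$S/R$ is a group'' into the single condition $(x,\,x\,\theta^{S}f^{S}(y,z))\in R$ for all $x,y,z$; this one equivalence delivers both halves of the lemma at once. The paper instead verifies associativity of $S/R$ by a direct computation with the cosets $R_{1}\circ x$, and obtains minimality by a universal-property argument: any right-loop homomorphism $\phi'\colon S\to H$ into a group has kernel congruence $K$ with $S/K$ a group, whence $(x,\,x\,\theta^{S}f^{S}(y,z))\in K$ and $R\subseteq K$, so every such $\phi'$ factors uniquely through $S/R$. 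The decisive step --- associativity of the quotient forces the generating pairs into the congruence --- is common to both proofs, but your packaging is more uniform and proves exactly the stated claim, while the paper's version proves slightly more (the universal mapping property of $S/R$ among group-valued homomorphisms). One point you lean on implicitly, as does the paper, is that a congruence is recovered from its identity class, i.e.\ $(a,b)\in R$ iff $R_{1}\circ a=R_{1}\circ b$; this is standard in the framework of \cite{rps} (congruences correspond bijectively to invariant right subloops), so it is not a gap, but it is worth citing explicitly where you write ``$(a,b)\in R$ if and only if $\nu(a)=\nu(b)$.''
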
 
 
\begin{proof} First, we will show that $S/R$ is a group. Let $1$ denote the identity of $S$. Since $R$ is a congruence on $S$, $R_1$ (equivalence class of $1$ under $R$) is an invariant subright loop of $S$. So for showing that $S/R$ is a group, it is sufficient to show that the binary operation of $S/ R$ is associative. Let $x, y, z \in S$. Then
\newline
\begin{align*}
 ((R_1\circ x)\circ (R_1\circ y))\circ (R_1\circ z) & =  (R_1\circ (x\circ y))\circ (R_1\circ z)\\
& =  R_1\circ ((x\circ y)\circ z)\\
& =  R_1\circ ( x \theta ^S f^S(y,z)\circ (y\circ z))\\
& =  R_1\circ (x \theta ^S f^S(y,z))\circ R_1\circ (y\circ z)\\
& = R_1\circ (x \theta ^S f^S(y,z))\circ ((R_1\circ y)\circ(R_1\circ z))\\
& =  (R_1\circ x)\circ ((R_1\circ y) \circ (R_1\circ z))\\ 
& ~~~~ (\text{since} ~(x,x \theta ^S f^S(y,z))\in  R).
 \end{align*}
Hence $S/R$ is a group.
Let $\phi:S \rightarrow S/{R}$ be the quotient homomorphism ($\phi(x)=R_1\circ x,~x\in S$). Let $H$ be a group with a homomorphism (of right loops) $\phi'~: ~ S\rightarrow H$. Since $\phi'$ is a homomorphism of right loops, $\phi'(S)$ is a sub right loop of $H$. Further, since the binary operation on $\phi'(S)$ is associative, it is a subgroup of $H$.
It is easy to verify that Ker$\phi'$ is an invariant sub right loop, that is there exists a congruence $K$ on $S$ such that $K_1=Ker\phi'$ (Lemma \ref{rps1}). By the Proposition \ref{prp1} there exists a unique one-one homomorphism $\bar{\phi}'~:S/K \rightarrow H$ such that $\bar{\phi}'\circ \nu={\phi}'$, where $\nu :S \rightarrow S/K$ is the quotient homomorphism ($\nu(x)=K_1\circ x,~x\in S$). Since $S/K$ is a group (being isomorphic to the subgroup ${\phi}'(S)$ of $H$), the associativity of its binary operation implies that $(x,x \theta ^S f^S(y,z))\in K$ for all $x,y,z \in S$. This implies $R \subseteq K$. This defines an onto homomorphism $\eta$ from $S/R$ to $S/K$ given by $\eta (R_1\circ x)=K_1\circ x$. Let ${\eta}'= \bar{\phi}'\circ \eta$. Then it follows easily that ${\eta}'$ is the unique homomorphism from $S/R$ to $H$ such that ${\eta}'\circ  \phi= {\phi}'$.
\end{proof}

\begin{rem}\label{gr3}
 Let $S$ and $R$ be as in the above Lemma. Let $U$ be a congruence
on $S$ containing $R$. Since $U_1/R_1$ is an invariant right subloop of $S/R_1$, it is a normal subgroup of $S/R_1$. Thus $S/U_1$ is a group for it is
isomorphic to $S/R_1/(U_1/R_1)$.
\end{rem}
It is easy to prove following lemma.
\begin{lem}\label{l3}
Let $(S, \circ)$ be a right loop. Let $L$ be the congruence on $S$ generated by $\{(x,x\theta^Sf^S(y,z))~|x,y\in S\} \cup \{(x \circ y, y\circ x) \mid x,y \in S\}$. Then $L$ is the smallest 
congruence on $S$ such that the quotient right loop $S/L$ is an abelian group.  
\end{lem}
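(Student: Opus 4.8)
The plan is to follow the template of the proof of Lemma~\ref{gl2}, adding the bookkeeping needed for commutativity. First I would note that the congruence generated by a set of pairs exists and equals the intersection of all congruences on $S$ containing that set, since an intersection of congruences is again a congruence and $S \times S$ is a congruence; thus $L$ is by definition the smallest congruence containing $\{(x, x\theta^S f^S(y,z)) \mid x,y,z \in S\} \cup \{(x\circ y, y\circ x) \mid x,y\in S\}$.

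Next I would show that $S/L$ is an abelian group. Because the generating set of $L$ contains the generating set of the congruence $R$ of Lemma~\ref{gl2}, we have $R \subseteq L$, so Remark~\ref{gr3} (applied with $U = L$) shows that $S/L = S/L_1$ is a group. For commutativity, observe that for all $x,y \in S$ the pair $(x\circ y, y\circ x)$ lies in $L$, hence in $S/L$ we get $(L_1 \circ x)\circ(L_1 \circ y) = L_1\circ(x\circ y) = L_1\circ(y\circ x) = (L_1 \circ y)\circ(L_1 \circ x)$; so $S/L$ is abelian.

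Finally I would establish minimality. Let $L'$ be any congruence on $S$ for which $S/L'$ is an abelian group. Since $S/L'$ is in particular a group, the minimality clause of Lemma~\ref{gl2} gives $R \subseteq L'$, so $(x, x\theta^S f^S(y,z)) \in L'$ for all $x,y,z \in S$. Since $S/L'$ is abelian, $L'_1\circ(x\circ y) = (L'_1\circ x)\circ(L'_1\circ y) = (L'_1\circ y)\circ(L'_1\circ x) = L'_1\circ(y\circ x)$, i.e. $(x\circ y, y\circ x) \in L'$ for all $x,y \in S$. Thus $L'$ contains both generating sets of $L$, whence $L \subseteq L'$. There is essentially no hard step here: the only points requiring care are verifying the hypothesis $R \subseteq L$ before invoking Remark~\ref{gr3}, and quoting the minimality part of Lemma~\ref{gl2} rather than re-deriving that $S/L'$ being a group forces the associator pairs into $L'$.
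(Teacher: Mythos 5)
Your proof is correct; the paper itself gives no argument for this lemma (it only says it ``is easy to prove''), and your route---getting the group structure on $S/L$ from $R\subseteq L$ via Remark~\ref{gr3}, commutativity from the extra generators, and minimality by combining the minimality clause of Lemma~\ref{gl2} with the observation that abelianness of $S/L'$ forces $(x\circ y,\,y\circ x)\in L'$---is exactly the intended one. The only point worth noting is that you (correctly) read the generating set as indexed by $x,y,z\in S$, silently fixing the paper's typo.
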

 
\begin{lem}\label{a}
Let $G$ be a group, $H$ a subgroup of $G$ and $S$ a transversal of $H$ in $G$. Suppose that $N\trianglelefteq G $ containing $H$. Then 
$$ G/N=HS/N \cong S/{N\cap S}.$$
\end{lem}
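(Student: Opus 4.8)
The plan is to construct an explicit surjection from $S$ onto the group $G/N$ and to recognize its kernel as $N\cap S$, after which the result follows from the fundamental theorem of homomorphisms for right loops. First, since $S$ is a transversal of $H$ in $G$, every element of $G$ lies in some coset $Hs$ with $s\in S$, so $G=HS$; hence $G/N=HS/N$ is immediate, and the content of the lemma is the isomorphism $G/N\cong S/(N\cap S)$.

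Next I would consider the canonical projection $\pi:G\to G/N$ and restrict it to $S$, writing $\bar{s}:=\pi(s)=sN$. I claim $\pi|_S:S\to G/N$ is an onto homomorphism of right loops, where $G/N$ is regarded as a right loop (indeed a group). For surjectivity: given $gN\in G/N$, write $g=hs$ with $h\in H$ and $s\in S$; since $h\in H\subseteq N$ and $N\trianglelefteq G$, we get $gN=hsN=sN=\pi(s)$. For the homomorphism property: for $x,y\in S$ we have $x\circ y\in Hxy$, say $x\circ y=hxy$ with $h\in H\subseteq N$, so $\pi(x\circ y)=\overline{hxy}=\overline{xy}=\bar{x}\,\bar{y}=\pi(x)\pi(y)$. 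Thus $\pi|_S$ is a right loop homomorphism from $S$ onto the group $G/N$.

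Then I would identify the kernel. By definition $\mathrm{Ker}(\pi|_S)=\{s\in S\mid sN=N\}=\{s\in S\mid s\in N\}=N\cap S$. As observed in the proof of Lemma~\ref{gl2}, the kernel of a homomorphism of right loops is an invariant right subloop, so $N\cap S$ is an invariant right subloop of $S$ and the quotient $S/(N\cap S)$ is defined. Applying Proposition~\ref{prp1} to $\rho=\pi|_S$ yields a unique injective homomorphism $\overline{\pi|_S}:S/(N\cap S)\to G/N$ with $\overline{\pi|_S}\circ\nu=\pi|_S$; since $\pi|_S$ is onto, $\overline{\pi|_S}$ is onto as well, hence an isomorphism of right loops. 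Combining, $G/N=HS/N\cong S/(N\cap S)$, and in particular $S/(N\cap S)$ is a group since it is isomorphic to $G/N$.

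I do not expect a genuine obstacle here; the only points requiring care are the coset bookkeeping in the homomorphism check, which makes essential use of the hypotheses $H\subseteq N$ and $N\trianglelefteq G$, and the bookkeeping needed to see that the kernel is exactly $N\cap S$ so that Proposition~\ref{prp1} applies cleanly.
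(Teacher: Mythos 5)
Your proposal is correct and follows essentially the same route as the paper: both restrict the canonical projection $g\mapsto gN$ to $S$, verify it is an onto right loop homomorphism using $x\circ y\in Hxy$ and $H\subseteq N$, identify the kernel as $N\cap S$, and conclude via the fundamental theorem of homomorphisms for right loops (Proposition~\ref{prp1}). No gaps; the extra remarks on surjectivity and on $N\cap S$ being an invariant right subloop are just slightly more explicit than the paper's version.
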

\begin{proof} Suppose that $\circ$ denotes the induced right loop operation on $S$. Consider the map $\psi :S \rightarrow HS/N$ defined as $\psi(x)=xN$. This is a homomorphism, for
\begin{align*}
\psi(x\circ y) & = (x\circ y)N\\
~~~~~~~~~~& =  hxyN ~\text{for some $h \in H$}\\ 
~~~~~~~~~~& =  \psi(x) \psi(y)~~(H \subseteq N).
\end{align*}     
Also, $Ker \psi =\{x \in S |~xN=N \}
=S \cap N.$
Since for $h\in H~$ and $~x\in S,$~we have $hxN=xN$ and $\psi (x)=xN$, $\psi$ is onto and so by the Fundamental Theorem of homomorphisms for right loops,
$S/{N\cap S} \cong HS/N $.
\end{proof}
  Let $G$ be a group, $H$ a subgroup and $S$ a transversal of $H$ in $G$. Suppose that $\circ$ is the induced right loop structure on $S$. We define a map $f: S \times S \rightarrow H$ as: for $x,y \in S$, $f(x,y):=xy(x \circ y)^{-1}$. We further define the action $\theta$ of $H$ on $S$ as $\{x \theta h\}:= S \cap Hxh$ where $h \in H$ and $x \in S$. With these notations it is easy to prove following lemma. 
 
 \begin{lem}\label{l4}
 For $x,y,z \in S$, we have $x \theta^Sf^S(y,z)=x \theta f(y,z)$.
  \end{lem}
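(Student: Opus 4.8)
The plan is to exploit the fact that $x\,\theta^S f^S(y,z)=f^S(y,z)(x)$ is, by definition of $f^S$ and of the action $\theta^S$, the \emph{unique} element $X\in S$ satisfying $X\circ(y\circ z)=(x\circ y)\circ z$. So it is enough to check that $w:=x\,\theta f(y,z)$ solves this equation, and then uniqueness forces $w=x\,\theta^S f^S(y,z)$. To do this I would translate everything into the ambient group $G$ using $f(a,b)=ab(a\circ b)^{-1}$, which gives the two working identities $ab=f(a,b)(a\circ b)$ and $a\circ b=f(a,b)^{-1}ab$ for all $a,b\in S$.

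First I would unravel $w$. By definition of $\theta$, $w$ is the unique element of $S$ lying in $Hxf(y,z)$; write $w=h_0\,x\,f(y,z)$ with $h_0\in H$. Then, using $y\circ z=f(y,z)^{-1}yz$,
\[
w(y\circ z)=h_0\,x\,f(y,z)\,f(y,z)^{-1}\,y\,z=h_0\,xyz\in Hxyz .
\]
On the other side, applying the identities twice,
\[
(x\circ y)\circ z=f(x\circ y,z)^{-1}(x\circ y)z=f(x\circ y,z)^{-1}f(x,y)^{-1}\,xyz\in Hxyz .
\]
Hence $Hw(y\circ z)=Hxyz$, so by the definition of the induced operation $\{w\circ(y\circ z)\}=S\cap Hw(y\circ z)=S\cap Hxyz$; but $(x\circ y)\circ z$ is an element of $S$ lying in $Hxyz$, and since $S$ is a transversal this intersection is a singleton. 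Therefore $w\circ(y\circ z)=(x\circ y)\circ z$, and uniqueness of the solution gives $w=f^S(y,z)(x)=x\,\theta^S f^S(y,z)$, i.e. $x\,\theta^S f^S(y,z)=x\,\theta f(y,z)$.

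I do not expect a genuine obstacle here — this is essentially bookkeeping with right cosets. The only points needing care are keeping the group elements of $H$ on the correct (left) side throughout, and noting at the outset that $w$ is well defined precisely because $S$ is a transversal, so that $S\cap Hxf(y,z)$ really is a single element; these are also the reasons the authors can afford to call the statement easy.
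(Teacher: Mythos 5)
Your proof is correct and is precisely the coset bookkeeping the paper omits when it declares the lemma ``easy to prove'': you verify that $x\,\theta f(y,z)$ satisfies the defining equation $X\circ(y\circ z)=(x\circ y)\circ z$ and invoke uniqueness of solutions in the right loop $(S,\circ)$. No gaps; the key observations (that $f$ takes values in $H$, and that $S$ meets each right coset exactly once) are all in place.
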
 
 
\begin{lem}\label{b}
Let $H$ be a subgroup of a group $G$ and $S$ a transversal of $H$ in $G$. 
Let $U$ be the congruence on $S$ such that~$\{(x,x\theta h)~|~h \in H,x\in S \} \subseteq U $. Let $T$ be the equivalence class of $1$ under $U$. 
Then $S/U$ is a group. Moreover, $N=HT \trianglelefteq HS=G$ (and so $H \leq N$ and~ $N\cap S=T)$ and $G/N \cong S/U $.
\end{lem}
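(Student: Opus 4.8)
The plan is to treat the two assertions separately, deriving the first from Remark~\ref{gr3} and the second by constructing an explicit surjection $G\to S/U$ along the transversal.

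For the claim that $S/U$ is a group, I would first note that $U$ already contains the defining pairs of the ``group congruence'' $R$ of Lemma~\ref{gl2}. Indeed, by Lemma~\ref{l4} we have $x\theta^S f^S(y,z)=x\theta f(y,z)$ for all $x,y,z\in S$, and $f(y,z)\in H$; hence every pair $(x,x\theta^S f^S(y,z))$ is of the form $(x,x\theta h)$ with $h\in H$ and so lies in $U$. Since $U$ is a congruence, $R\subseteq U$, and Remark~\ref{gr3} gives that $S/U$ (which is $S/T$) is a group.

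For the second assertion, write $\nu\colon S\to S/U$ for the natural right loop homomorphism. Since $S$ is a transversal, every $g\in G$ has a unique factorization $g=hs$ with $h\in H$ and $s\in S$, and I would define $\pi\colon G\to S/U$ by $\pi(hs)=\nu(s)$; this is a well-defined function that is onto because $\pi|_S=\nu$. The substantive point is that $\pi$ is a group homomorphism. Given $g_1=h_1s_1$ and $g_2=h_2s_2$, one moves $h_2$ past $s_1$ using the action, $s_1h_2=h'(s_1\theta h_2)$ with $h'\in H$, and then writes $(s_1\theta h_2)s_2=f(s_1\theta h_2,s_2)\bigl((s_1\theta h_2)\circ s_2\bigr)$; by uniqueness of the factorization the $S$-component of $g_1g_2$ is therefore $(s_1\theta h_2)\circ s_2$. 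Applying $\nu$, using that $\nu$ is a right loop homomorphism, and using the hypothesis $(s_1,s_1\theta h_2)\in U$ (so that $\nu(s_1\theta h_2)=\nu(s_1)$), we obtain $\pi(g_1g_2)=\nu(s_1)\circ\nu(s_2)=\pi(g_1)\pi(g_2)$.

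It remains to read off the kernel. Since $\nu(s)$ is the identity of $S/U$ exactly when $s\in T$, we get $\ker\pi=\{hs\mid h\in H,\ s\in T\}=HT=N$. Hence $N\trianglelefteq G=HS$ and, by the first isomorphism theorem (or by Lemma~\ref{a} once $N\trianglelefteq G$ is known), $G/N\cong S/U$. Finally $H=H\cdot 1\subseteq HT=N$ since $1\in T$, and if $s\in S\cap HT$ then $s=ht$ for some $h\in H$, $t\in T$, whence $Hs=Ht$ and the transversal property forces $s=t\in T$; thus $N\cap S=T$. I expect the only real obstacle to be the homomorphism check for $\pi$ — correctly tracking the $S$-component of a product in terms of the action $\theta$ and the factor system $f$, and seeing that the relations $(x,x\theta h)\in U$ are exactly what is needed to absorb the resulting $H$-twist; the remaining steps are routine bookkeeping.
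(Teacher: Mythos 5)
Your proof is correct and follows essentially the same route as the paper's: the first assertion is obtained by observing via Lemma~\ref{l4} that the generators of the group congruence $R$ lie in $U$ and invoking Remark~\ref{gr3}, and the second by defining the same surjection $G\to S/U$, $hs\mapsto T\circ s$, checking it is a homomorphism using the relation $(s_1,s_1\theta h_2)\in U$, and reading off the kernel as $HT$. Your explicit verifications that $H\le N$ and $N\cap S=T$ are slightly more detailed than the paper's, but the argument is the same.
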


\begin{proof}~By Lemma \ref{l4}, $x\theta f(y,z)=x\theta^S f^S(y,z)$ for all $ x,y,z \in S$. Let $R$ be a congruence on $S$ generated by $\{(x,x \theta^S f^S(y,z))|x,y,z \in S\}$. Then, clearly $R \subseteq U$ and by Remark \ref{gr3}, $S/U$ is a group.
Let $\phi:G \rightarrow S/U$ be the map defined by $\phi(hx)=T\circ x,~h\in H,~x\in S$. This is a homomorphism, because for all $h_1,~h_2 \in H$ and $x_1,~x_2 \in S,$
\begin{align*} 
\phi(h_1x_1h_2x_2) & =  \phi(h_1h(x_1{\theta} h_2\circ x_2))~\text{for some $h \in H$}\\
 & =  T\circ (x_1{\theta} h_2\circ x_2)\\
% & =  (T\circ (x_1{\theta} h_2))~\circ ~(T\circ x_2)\\
 & =  (T\circ x_1)~\circ~(T\circ x_2)~~~~~~~(\text{for}~(x_1, x_1{\theta} h_2) \in U)\\
 & =  \phi (h_1x_1)\phi (h_2x_2).
\end{align*}
Let $h\in H$ and $x\in S.$ Then $hx\in Ker \phi $ if and only if $x\in T.$ Hence $Ker\phi = HT=N(say)$. This proves the lemma.
\end{proof}

\section{Solvable Right Loop}
In this section, we will define solvable right loop and show that if a group has a solvable generating transversal with respect to a core-free subgroup, then the group is solvable.

\begin{defn}
We define $S^{(1)}$ to be the smallest invariant subright loop of $S$ such that $S/S^{(1)}$ is an abelian group that is, if there is another invariant subright loop $N$ such that $S/N$ is an abelian group, then $S^{(1)} \subseteq N$.

We define $S^{(n)}$ by induction. Suppose $S^{(n-1)}$ is defined. Then $S^{(n)}$ is an invariant subright loop of $S$ such that $S^{(n)}=(S^{(n-1)})^{(1)}$.
\end{defn}

Note that if $S$ is a group, then $S^{(1)}$ is the commutator subgroup and $S^{(n)}$ is the $n$th-commutator subgroup.

\begin{defn}\label{s}
We call $S$ solvable if  there exists an $n \in \mathbb{N}$ such that $ S^{(n)}=\{1\}$.
\end{defn}
\begin{defn}
Let $S$ be a transversal of a subgroup $H$ of $G$. We call $S$ a solvable transversal if it is solvable with respect to the induced right loop structure.
\end{defn}
\begin{thm}\label{1}
If a group has a solvable generating transversal with respect to a core-free subgroup, then the group is solvable.
\end{thm}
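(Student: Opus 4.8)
The plan is to induct on the solvable length $n$ of the generating transversal $S$, using the associated group $G_SS$ (which by the quoted \cite[Proposition 3.10]{rltr} is isomorphic to $G$, with $G_S \cong H$). The base case $n=0$ is trivial: $S=\{1\}$ forces $G_S$ trivial and hence $G$ trivial. For the inductive step, suppose every group admitting a solvable generating transversal of length $<n$ with respect to a core-free subgroup is solvable, and let $S$ be a solvable generating transversal of length $n$ for the core-free subgroup $H \cong G_S$ in $G \cong G_SS$.

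The key object is the first derived invariant right subloop $S^{(1)}$, which is invariant in $S$ with $S/S^{(1)}$ an abelian group; its solvable length is $n-1$. Let $U$ be the congruence associated to $S^{(1)}$ (via Lemma \ref{rps1}), so $S/U \cong S/S^{(1)}$. The first main step is to push $S^{(1)}$ up into $G$: by Lemma \ref{b}, taking the congruence $U$ and letting $T=S^{(1)}$ be the class of $1$, we would want the hypothesis $\{(x,x\theta h) \mid h\in H, x\in S\}\subseteq U$ so as to conclude $N:=HT=HS^{(1)} \trianglelefteq G$ with $G/N \cong S/U \cong S/S^{(1)}$, an abelian group. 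Thus $N$ contains $[G,G]$ and it suffices to show $N$ is solvable; since $N = HS^{(1)}$ and $S^{(1)}$ is a transversal of $H$ in $N$, I want to invoke the inductive hypothesis on the pair $(N,H)$ with transversal $S^{(1)}$ — provided $H$ is core-free in $N$ (true, as $\mathrm{Core}_N(H) \leq \mathrm{Core}_G(H)=\{1\}$) and provided $S^{(1)}$ generates $N$.

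The hard part, and the step I expect to be the main obstacle, is precisely verifying the two hypotheses needed to close the loop: first, that the $H$-action congruence $\{(x,x\theta h)\}$ is actually contained in the congruence $U$ generated by $S^{(1)}$ — equivalently, that $T=S^{(1)}$ is already $\theta$-invariant, i.e. $x\theta h$ lies in the $S^{(1)}$-coset of $x$ for all $h\in H$; and second, that $S^{(1)}$ together with $H$ generates $N=HS^{(1)}$ as a group. For the first, the idea is that the congruence defining $S^{(1)}$ (by Lemma \ref{l3}) is generated by the $f^S(y,z)$-relations and the commutator relations $(x\circ y, y\circ x)$; using Lemma \ref{l4} ($x\theta^S f^S(y,z) = x\theta f(y,z)$) the $f$-part of the $H$-action is captured, and one must check the remaining $H$-translates are absorbed because $G/HS^{(1)}$ being abelian forces the conjugation action of $H$ on cosets to be trivial mod $S^{(1)}$ — this is really the content of Lemma \ref{b} applied in reverse and will need care. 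For the generation claim, since $S$ generates $G$ and $S/S^{(1)} \cong G/N$, every element of $S$ lies in $N$ times a coset representative of $G/N$; writing an arbitrary element of $N$ in terms of the generators of $G$ and reducing modulo the known structure should show it lies in $\langle H, S^{(1)}\rangle$. Once both points are settled, the inductive hypothesis gives $N$ solvable, and then $G$ is solvable because it is an extension of the abelian group $G/N$ by the solvable group $N$. $\qed$
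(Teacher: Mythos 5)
Your reduction to the pair $(N,H)$ with $N=HS^{(1)}$ and transversal $S^{(1)}$ does not close, for two independent reasons, and the first is an outright false step rather than a gap. You assert $\mathrm{Core}_N(H)\leq \mathrm{Core}_G(H)$; the inclusion goes the other way. The core of $H$ in a subgroup $N$ with $H\leq N\leq G$ is the intersection of the $N$-conjugates of $H$, which \emph{contains} the intersection of all $G$-conjugates, so shrinking the ambient group can only enlarge the core. The paper's own $\mathrm{Alt}(4)$ example in Section 5 kills both of your hypotheses at once: for $G=\mathrm{Alt}(4)$, $H=\{I,(1,2)(3,4)\}$ and the listed transversal $S$, one has $G^{(1)}=V_4\supseteq H$, hence $S^{(1)}=S\cap HG^{(1)}=\{I,(1,3)(2,4)\}$ and $N=HS^{(1)}=V_4$. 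There $H$ is normal in $N$, so $\mathrm{Core}_N(H)=H\neq\{1\}$; moreover $\langle S^{(1)}\rangle$ has order $2$, so $S^{(1)}$ is not a generating transversal of $H$ in $N$. (Note that $\langle H,S^{(1)}\rangle=N$, which is what your sketch actually argues for, is trivially true and is not what the inductive hypothesis requires.) So the theorem's hypotheses fail for $(N,H,S^{(1)})$ and the induction on solvable length cannot be run this way.

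The first obstacle you flag --- that $\{(x,x\theta h)\mid h\in H,\ x\in S\}$ is contained in the congruence attached to $S^{(1)}$ --- is genuine but fillable essentially as you suggest: for a generating transversal the elements $f(y,z)$ generate $H$, and Lemma \ref{l4} together with transitivity of the congruence absorbs arbitrary $h\in H$; Lemma \ref{b} then gives $N=HS^{(1)}\trianglelefteq G$ with $G/N\cong S/S^{(1)}$ abelian, whence $HG^{(1)}=HS^{(1)}$. The paper's proof stays inside $G$ from this point instead of descending to $N$: it iterates the identity to get $HS^{(n)}\supseteq HG^{(n)}$ for all $n$, so $S^{(n)}=\{1\}$ forces $G^{(n)}\leq H$, and core-freeness is applied only where it is actually available, namely to the normal subgroup $G^{(n)}$ of $G$ itself. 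That is the repair your argument needs.
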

\begin{proof}
Let $G$ be a group and $H$ a core-free subgroup of it. Suppose that $S$ is a generating transversal of $H$ in $G$. Then the group $G$ can be written as $HS$. By Lemma \ref{a}, $G/HG^{(1)} \cong S/S\cap HG^{(1)}$.
So, \begin{equation}\label{(i)} S^{(1)} \subseteq S\cap HG^{(1)}.
\end{equation}

By Lemma \ref{b}, $HS^{(1)}$ is a normal subgroup of $G$.
Thus $G/HS^{(1)} =S/S^{(1)} $ (Lemma \ref{a}).
Since $S/S^{(1)} $ is abelian, $ G^{(1)} \subseteq HS^{(1)}.$ Thus  
\begin{equation} \label{(ii)} S\cap HG^{(1)}\subseteq S^{(1)}.
\end{equation} 

From \eqref{(ii)} and \eqref{(i)}, it is clear that \begin{equation} S\cap HG^{(1)} = S^{(1)} \end{equation}\label{(iii)(a)} and
\begin{equation}\label{(iii)(b)}HG^{(1)} =HS^{(1)}.\end{equation} 
%Now $HS^{(2)} \trianglelefteq HS^{(1)} $ (Lemma \ref{b}).
%So $HS^{(1)}/HS^{(2)} \cong S^{(1)}/S\cap S^{(2)}$  (Lemma \ref{a}).
%So $(HS^{(1)})^{(1)} \subseteq HT^{(2)}$.
%This implies $H(HS^{(1)})^{(1)} \subseteq HS^{(2)} \hfill (iii)(d)$.
%So, $S\cap H(HS^{(1)})^{(1)} \subseteq HS^{(2)}  \hfill (iii)(e)$.

%Also $HS^{(1)}/H(HS^{(1)})^{(1)} \cong S^{(1)}/ S^{(1)} \cap H(HS^{(1)})^{(1)}$. That is $S^{(2)} \subseteq S^{(1)} \cap H(HS^{(1)})^{(1)}$.
%So, $T^{(2)}=T^{(1)} \cap H(HT^{(1)})^{(1)} \hfill (iv)$. 
We will use induction to prove that, $HS^{(n)}=  H(HS^{(n-1)})^{(1)}$ for $n \geq 1$ and by $S^{(0)}$ we mean $S$. For $n=1$, $HS^{(1)}= HG^{(1)}= H(HS^{(0)})^{(1)}$ (by \eqref{(iii)(b)}).
By induction, suppose that $HS^{(n-1)}= H(HS^{(n-2)})^{(1)}$.
Since  $S^{(n-1)}/S^{(n)}\cong HS^{(n-1)}/HS^{(n)}  $ is an abelian group, $(HS^{(n-1)})^{(1)} \subseteq HS^{(n)}$. 
Thus $ H(HS^{(n-1)})^{(1)} \subseteq HS^{(n)}$.

 Further, $HS^{(n-1)}/H(HS^{(n-1)})^{(1)} $ $\cong S^{(n-1)}/(S^{(n-1)} \cap H(HS^{(n-1)})^{(1)})$ (Lemma \ref{a}). 
So $S^{(n)} \subseteq S^{(n-1)} \cap H(HS^{(n-1)})^{(1)}=S \cap H(HS^{(n-1)})^{(1)}$. 
That is, \begin{equation}\label{(*)} HS^{(n)}= H(HS^{(n-1)})^{(1)}~\text{for all}~ n \geq 1.\end{equation}  
Now  \eqref{(*)} implies that \begin{equation}\label{(**)} HS^{(n)}=H(HS^{(n-1)})^{(1)}  \supseteq H(H(HS^{(n-2)})^{(1)})^{(1)} \supseteq H(HS^{(n-2)})^{(2)} . \end{equation}  
Proceeding inductively, we have $ HS^{(n)}\supseteq H(HS)^{(n)}=HG^{(n)}$.
Suppose that $S$ is solvable, that is there exists $n \in \mathbb{N}$ such that $S^{(n)}=\{1\}$. Then $G^{(n)} \subseteq H$. Since $G^{(n)}$ is normal subgroup of $G$ contained in $H$, so $G^{(n)}=\{1\}$. This proves the theorem.
\end{proof}
 
 Converse of the above theorem is not true. For example take $G$ to be  the Symmetric group on three symbols and $H$ to be any two order subgroup of it. Then $H$ has no solvable generating transversal but we know that $G$ is solvable.  Following is the easy consequence of the above theorem.
 \begin{cor} \label{cor1}
 If $S$ is a solvable right loop, then $G_SS$ is solvable.
 \end{cor}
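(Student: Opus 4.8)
The plan is to deduce Corollary \ref{cor1} directly from Theorem \ref{1} by exhibiting $S$ itself as a generating transversal of a core-free subgroup inside $G_SS$. First I would recall from the introduction the two structural facts already established in \cite{rltr}: the right multiplication maps attached to the elements of $S$ form a transversal of $G_S$ in $G_SS$, and moreover this transversal, under the induced right loop operation, is isomorphic to $(S,\circ)$ itself. So $S$ (identified with this transversal) is a transversal of $G_S$ in $G_SS$, and by construction $G_SS$ is generated by $G_S$ together with these right multiplication maps; hence $S$ is in fact a \emph{generating} transversal of $G_S$ in $G_SS$.

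The second ingredient is that $G_S$ is core-free in $G_SS$. I would argue this as follows: $G_S$ acts on $S$ via $\theta^S$, and if an element $h \in G_S$ lies in the core, it is normalized by every right multiplication map, from which one checks that $h$ fixes every element of $S$ (using that conjugating $h$ by the right multiplication by $x$ and evaluating at the identity forces $h(x) = x$, or the analogous computation with the transversal description). Since $G_S \le Sym(S)$ acts faithfully on $S$ by definition, $h$ must be the identity. Thus the core of $G_S$ in $G_SS$ is trivial.

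With these two observations in place, the corollary is immediate: if $S$ is a solvable right loop, then $S$ is a solvable generating transversal of the core-free subgroup $G_S$ in the group $G_SS$, so Theorem \ref{1} applies verbatim and yields that $G_SS$ is solvable.

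The only real obstacle is making sure the core-freeness of $G_S$ in $G_SS$ is genuinely available — it is implicit in the setup of \cite{rltr} (indeed \cite[Proposition 3.10]{rltr}, cited in the introduction, says that for a core-free $H$ with generating transversal $S$ one has $G \cong G_SS$ with $H \cong G_S$, and every right loop arises this way), but if one wants a self-contained argument the short faithfulness-of-action computation above supplies it. Everything else is bookkeeping: translating "solvable right loop" into "solvable transversal" is exactly Definition \ref{s} together with the definition of a solvable transversal, and the rest is a direct invocation of Theorem \ref{1}.
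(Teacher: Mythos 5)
Your proposal is correct and is exactly the argument the paper has in mind: the paper states the corollary without proof as an ``easy consequence'' of Theorem \ref{1}, relying on the facts from \cite{rltr} that the right multiplication maps form a transversal of $G_S$ in $G_SS$ isomorphic to $(S,\circ)$, that they generate $G_SS$ (since $f^S(y,z)=R_yR_zR_{y\circ z}^{-1}$), and that $G_S$ is core-free in $G_SS$. Your faithfulness computation supplies the core-freeness cleanly, so nothing is missing.
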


\section{Nilpotent Right Loop}

In this section, we define nilpotent right loop which is nothing but a particular case of nilpotent Mal'cev Varieties defined in \cite{smthmv}. We will show that if a group has a nilpotent generating transversal with respect to a core-free subgroup $H$, then $H$ is solvable group. This will generalize a result of \cite{rhb}.

\begin{defn}\label{6c2sd1}(\cite[Definition 211, p. 24]{smthmv}) Let $\beta$ and $\gamma$ be congruences on a right loop $S$. Let $(\gamma|\beta)$ be a congruence on $\beta$. Then $\gamma$ is said to \textit{centralize} $\beta$ by means of the \textit{centering congruence} $(\gamma|\beta)$ such that following conditions are satisfied:
 
\begin{enumerate}
	\item[(i)] $(x,y) ~(\gamma|\beta) ~(u,v) \Rightarrow x ~\gamma ~u$, for all $(x,y), (u,v) \in \beta$.
	\item[(ii)] For all $(x,y) \in \beta$, the map $\pi : (\gamma|\beta)_{(x,y)} \rightarrow \gamma_x$ defined by $(u, v)\mapsto u$ is a bijection, where for a set $X$ and an equivalence relation $\delta$ on $X$, $\delta_w$ denotes the equivalence class of $w \in X$ under $\delta$. 
	\item[(iii)] For all $(x,y) \in \gamma$, $(x,x) ~(\gamma|\beta) ~(y,y)$.
	\item[(iv)] $(x,y) ~(\gamma|\beta) ~(u,v)\Rightarrow (y,x) ~(\gamma|\beta) ~(v,u) $, for all $(x,y), (u,v) \in \beta$.
	\item[(v)] $(x,y) ~(\gamma|\beta) ~(u,v)$ and $(y,z) ~(\gamma|\beta) ~(v,w)$ $\Rightarrow (x,z) ~(\gamma|\beta) ~(u,w)$, for all $(x,y), (u,v), (y,z)$ and $(v,w)$ in $\beta$.
\end{enumerate}
  
 \end{defn}
 By $(i)$ and $(iv)$, we observe that $(x,y) ~(\gamma|\beta) ~(u,v) \Rightarrow y ~\gamma ~v$.
 
 Let $S$ be a right loop. If a congruence $\alpha$ on $S$ is centralized by $S \times S$, then it is called a \textit{central congruence} (see \cite[p. 42]{smthmv}). By \cite[Proposition 221, p. 34]{smthmv} and \cite[Proposition 226, p. 38]{smthmv}, there exists a unique maximal central congruence $\zeta(S)$ on $S$, called as the \textit{center congruence} of $S$. For a right loop, it is product of all centralizing congruences. The \textit{center} $\mathcal{Z}(S)$ of $S$ is defined as $\zeta_1$, the equivalence class of the identity $1$. In \cite[Proposition 3.3, p. 6]{vk}, it is observed that if $x \in \mathcal{Z}(S)$, then $x \circ (y \circ z)=(x \circ y) \circ z$ for all $y,z \in S$. In \cite[Proposition 3.4, p. 6]{vk}, it is observed that if $x \in \mathcal{Z}(S)$, then $x \circ y=y \circ x$ for all $y \in S$. This means that the center $\mathcal{Z}(S)$ is an abelian group.
 
\begin{defn}\label{nr}
A right loop $S$ is said to be \textit{nilpotent} if it has a central series 
\[\{1\}=\mathcal{Z}_0 \leq \mathcal{Z}_1 \leq \cdots \leq \mathcal{Z}_n=S\]
for some $n \in \mathbb{N}$, where \[\mathcal{Z}_{i+1}/\mathcal{Z}_i=\mathcal{Z}(S/\mathcal{Z}_i)~\text{and}~\mathcal{Z}_1=\mathcal{Z}(S).\]
\end{defn} 
On can observe that $\mathcal{Z}_i$ ($0 \leq i \leq n$) is an invariant right subloop of $S$. We call a transversal $S$ of a subgroup $H$ of a group $G$ to be nilpotent, if it is nilpotent with respect to the induced right loop structure.  

\begin{lem}
Every nilpotent right loop is solvable right loop. 
\end{lem}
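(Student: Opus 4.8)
The plan is to transcribe the classical argument that a nilpotent group is solvable, squeezing the derived series of $S$ between the terms of the upper central series. Fix a central series $\{1\}=\mathcal{Z}_0\le\mathcal{Z}_1\le\cdots\le\mathcal{Z}_n=S$ as in Definition~\ref{nr}; recall that each $\mathcal{Z}_j$ is an invariant right subloop of $S$ and that, since the center of any right loop is an abelian group, every factor $\mathcal{Z}(S/\mathcal{Z}_j)=\mathcal{Z}_{j+1}/\mathcal{Z}_j$ is an abelian group. I would prove by induction on $i$ that $S^{(i)}\subseteq\mathcal{Z}_{n-i}$ for $0\le i\le n$, with the convention $S^{(0)}=S$. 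The base case $i=0$ is the identity $S=\mathcal{Z}_n$, and taking $i=n$ yields $S^{(n)}\subseteq\mathcal{Z}_0=\{1\}$, so $S$ is solvable in the sense of Definition~\ref{s}.

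For the inductive step, assume $S^{(i)}\subseteq\mathcal{Z}_{n-i}$, put $\tilde S:=S/\mathcal{Z}_{n-i-1}$, and let $\pi\colon S\to\tilde S$ be the natural homomorphism, so that $\ker\pi=\mathcal{Z}_{n-i-1}$ and $\pi(\mathcal{Z}_{n-i})=\mathcal{Z}_{n-i}/\mathcal{Z}_{n-i-1}=\mathcal{Z}(\tilde S)$ by the defining property of the upper central series. Applying $\pi$ to the hypothesis gives $\pi(S^{(i)})\subseteq\mathcal{Z}(\tilde S)$. The crux is the sub-claim that an onto homomorphism carries the derived series onto the derived series, i.e. $\pi(S^{(k)})=\tilde S^{(k)}$ for all $k$; granting this, $\tilde S^{(i)}\subseteq\mathcal{Z}(\tilde S)$. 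Since $\mathcal{Z}(\tilde S)$ is an abelian group and $\tilde S^{(i)}$ is a right subloop of it, the induced operation on $\tilde S^{(i)}$ is associative and commutative, so $\tilde S^{(i)}$ is itself an abelian group and $\tilde S^{(i+1)}=(\tilde S^{(i)})^{(1)}=\{1\}$. Hence $\pi(S^{(i+1)})=\tilde S^{(i+1)}=\{1\}$, i.e. $S^{(i+1)}\subseteq\ker\pi=\mathcal{Z}_{n-i-1}$, which closes the induction.

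To establish the sub-claim I would first handle $k=1$ for an arbitrary onto homomorphism $\rho\colon S\to S'$. On one hand, $\rho(S^{(1)})$ is an invariant right subloop of $S'$ by Lemma~\ref{prop1}, and composing $\rho$ with the projection $S'\to S'/\rho(S^{(1)})$ exhibits $S'/\rho(S^{(1)})$, via Proposition~\ref{prp1}, as a homomorphic image of the abelian right loop $S/S^{(1)}$, hence abelian; minimality of $(S')^{(1)}$ then forces $(S')^{(1)}\subseteq\rho(S^{(1)})$. On the other hand, $\rho^{-1}((S')^{(1)})$ is an invariant right subloop of $S$ by Lemma~\ref{lemm}, and by Proposition~\ref{prp1} the quotient $S/\rho^{-1}((S')^{(1)})$ is isomorphic to the abelian right loop $S'/(S')^{(1)}$, so minimality of $S^{(1)}$ gives $S^{(1)}\subseteq\rho^{-1}((S')^{(1)})$, i.e. $\rho(S^{(1)})\subseteq(S')^{(1)}$. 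For $k>1$ one iterates: by the inductive hypothesis $\rho$ restricts to an onto homomorphism $S^{(k-1)}\to(S')^{(k-1)}$, and applying the case $k=1$ to this restriction gives $\rho(S^{(k)})=\rho((S^{(k-1)})^{(1)})=((S')^{(k-1)})^{(1)}=(S')^{(k)}$.

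I expect the main obstacle to be this sub-claim together with a couple of small closure observations rather than anything conceptual: one must keep straight that kernels and homomorphic images of invariant right subloops are again invariant right subloops (supplied by Lemmas~\ref{prop1} and~\ref{lemm}), that the relevant maps genuinely descend to the quotients (Proposition~\ref{prp1}), and that a right subloop of an abelian group is again an abelian group, so that its derived subloop is trivial. Once these are in place the whole proof is a faithful copy of the group-theoretic one.
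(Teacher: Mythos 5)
Your proof is correct and follows essentially the same route as the paper: both arguments sandwich the derived series inside the central series by showing $S^{(i)}\subseteq\mathcal{Z}_{n-i}$ and then use that $\mathcal{Z}_1$ (or $\mathcal{Z}_0$) forces $S^{(n)}=\{1\}$. The only difference is that you supply in full the inductive step (via the sub-claim that onto homomorphisms carry derived series to derived series) which the paper states without proof as ``Thus $S^{(i)}\subseteq\mathcal{Z}_{n-i}$.''
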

\begin{proof}
Let $S$ be a nilpotent right loop with central series as follows:

\[\{1\}=\mathcal{Z}_0 \leq \mathcal{Z}_1 \leq \cdots \leq \mathcal{Z}_n=S\]
where \[\mathcal{Z}_{i+1}/\mathcal{Z}_i=\mathcal{Z}(S/\mathcal{Z}_i).\]
and $\mathcal{Z}_1$ is the center of $S$. Note that $S^{(1)}$ is contained in $\mathcal{Z}_{n-1}$ for $S/\mathcal{Z}_{n-1}$ is an abelian group. Thus $S^{(i)} \subseteq \mathcal{Z}_{n-i}$. This implies $S^{(n-1)} \subseteq \mathcal{Z}_{1}$. Since $\mathcal{Z}_{1}$ is an abelian group, so $S^{(n)}=\{ 1\}$. Thus $S$ is solvable.
\end{proof}

%Above Lemma does not give any information about atructure of the group $G_S$ of nilpotent right loop $S$.
Since a nilpotent right loop $S$ is solvable, by Corollary \ref{cor1}, $G_SS$ is solvable. But in this proof we do not know much about the structure of $G_S$. We are going to prove again that if $S$ is nilpotent, then $G_S$ is solvable. As a consequence of it we obtain that if order of $S$ is prime power then the order of $G_SS$ will be prime power.  
\begin{prop}\label{4sp1}
Let $S$ be a right loop. Let $\theta : G_S \rightarrow G_{S/\mathcal{Z}(S)}$ be the onto homomorphism induced by natural projection $\nu:S \rightarrow S/\mathcal{Z}(S)$. Then $Ker \theta$ is isomorphic to a subgroup of abelian group $\prod_{\mathcal A}\mathcal{Z}(S) $ for some indexing set ${\mathcal A}$.
\end{prop}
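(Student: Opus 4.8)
The plan is to realize $Ker\,\theta$ explicitly as a group of ``central left multiplications'' on $S$ and then to coordinatize it. From the discussion preceding the proposition, with $T=\mathcal{Z}(S)$ one has $Ker\,\theta=\{h\in G_S\mid T\circ h(x)=T\circ x\ \text{for all}\ x\in S\}$. Since $\mathcal{Z}(S)$ is an invariant right subloop, Lemma \ref{rps1} describes the class $T\circ x$ as $\{a\circ x\mid a\in\mathcal{Z}(S)\}$, so for each $h\in Ker\,\theta$ and each $x\in S$ there is a \emph{unique} $a^h_x\in\mathcal{Z}(S)$ with $h(x)=a^h_x\circ x$ (uniqueness since $X\circ x=h(x)$ has a unique solution in $S$). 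Recall also that $\mathcal{Z}(S)$ is an abelian group under $\circ$ by \cite[Proposition 3.3, p. 6]{vk} and \cite[Proposition 3.4, p. 6]{vk}, hence $\prod_{x\in S}\mathcal{Z}(S)$ is an abelian group.

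The technical heart of the argument is the identity
\[ h(a\circ x)=a\circ h(x)\qquad\text{for all }h\in G_S,\ a\in\mathcal{Z}(S),\ x\in S. \]
I would first prove it for a generator $h=f^S(y,z)$. By definition $f^S(y,z)(x)=x\theta^S f^S(y,z)$ is the unique solution $X$ of $X\circ(y\circ z)=(x\circ y)\circ z$; applying the associativity of a central element three times (\cite[Proposition 3.3, p. 6]{vk}) gives $((a\circ x)\circ y)\circ z=\bigl(a\circ(x\theta^S f^S(y,z))\bigr)\circ(y\circ z)$, so by uniqueness $f^S(y,z)(a\circ x)=a\circ f^S(y,z)(x)$. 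The identity then passes to inverses of generators (apply the generator, then cancel) and to products (compose, using $(hh')(x)=h'(h(x))$), hence holds on all of $G_S$.

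Granting this, define $\Phi:Ker\,\theta\to\prod_{x\in S}\mathcal{Z}(S)$ by $\Phi(h)=(a^h_x)_{x\in S}$. It is injective, since $h(x)=a^h_x\circ x$ recovers $h$ from $\Phi(h)$. It is a homomorphism: using $(hh')(x)=h'(h(x))$, the displayed identity, and the associativity of the central element $a^h_x$,
\[ (hh')(x)=h'(a^h_x\circ x)=a^h_x\circ h'(x)=a^h_x\circ(a^{h'}_x\circ x)=(a^h_x\circ a^{h'}_x)\circ x, \]
so $a^{hh'}_x=a^h_x\circ a^{h'}_x$ for all $x$, i.e.\ $\Phi(hh')=\Phi(h)\Phi(h')$. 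Therefore $Ker\,\theta$ is isomorphic to a subgroup of the abelian group $\prod_{\mathcal A}\mathcal{Z}(S)$ with $\mathcal A=S$; in fact the displayed identity also forces $a^h_{a\circ x}=a^h_x$, so one may even take $\mathcal A=S/\mathcal{Z}(S)$.

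The only step requiring real care is the displayed commutation identity $h(a\circ x)=a\circ h(x)$, and within it only the generator case; once that is unwound from the definition of $f^S(y,z)$ together with associativity past a central element, everything else is bookkeeping. (If $\mathcal{Z}(S)=\{1\}$ the statement is trivial, since then $Ker\,\theta=\{1\}$.)
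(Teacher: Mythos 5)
Your proof is correct and follows essentially the same route as the paper: both coordinatize $Ker\,\theta$ by the unique central elements $a^h_x\in\mathcal{Z}(S)$ with $h(x)=a^h_x\circ x$ and embed it into $\prod\mathcal{Z}(S)$. The only difference is that you derive the key commutation identity $h(a\circ x)=a\circ h(x)$ from scratch on the generators $f^S(y,z)$, whereas the paper obtains it by citing condition $(C7)$ of \cite[Definition 2.1]{rltr} together with \cite[Proposition 3.3]{vk}; your version is slightly more self-contained but not a different argument.
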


\begin{proof}
Let ${\mathcal A}$ be a set obtained by choosing one element from each right coset of $\mathcal{Z}(S)$ in $S$ except $\mathcal{Z}(S)$. Then $S= \sqcup_{x_i \in \mathcal A}  (\mathcal{Z}(S) \circ x_{i})\sqcup \mathcal{Z}(S)$. Let $h \in Ker \theta$. Then $h(x_i)=z \circ x_i$ for some $z \in \mathcal{Z}(S)$. If $y=u_i \circ x_i$, where $u_i \in \mathcal{Z}(S)$, then \\
$h(y)=h(u_i \circ x_i)=u_i \circ h(x_i)$ (by condition $(C7)$ of \cite[Definition 2.1, p.71]{rltr} 

\qquad \qquad \qquad \qquad \qquad \qquad and \cite[Proposition 3.3, p.6]{vk})

\quad $=u_i \circ (z \circ x_i)$

\quad $=(u_i \circ z) \circ x_i$ (for $u_i \in \mathcal{Z}(S)$)

\quad $=(z \circ u_i) \circ x_i$ 

\quad $=z \circ (u_i \circ x_i)$ (for $z \in \mathcal{Z}(S) $)

\quad $=z \circ y$.

Thus $h \in Ker \theta$ is completely determined by $h(x_i)$ $(x_i \in {\mathcal A})$. Therefore, it defines a map $\eta: Ker \theta \rightarrow \prod_{\mathcal A}\mathcal{Z}(S) $ by $\eta(h)=(z_i)_{\mathcal A}$, where $h(x_i)=z_i \circ x_i$. One can check that $\eta$ is injective homomorphism.
\end{proof}
Following is the finite version of above proposition.

\begin{cor}
Let $S$ be a finite right loop with $|S/\mathcal{Z}(S)|=k$. Let $\theta : G_S \rightarrow G_{S/\mathcal{Z}(S)}$ be the onto homomorphism induced by natural projection $\nu:S \rightarrow S/\mathcal{Z}(S)$. Then $Ker \theta$ is isomorphic to a subgroup of abelian group $\mathcal{Z}(S) \times \cdots \times \mathcal{Z}(S)$ ($k-1$ times).
\end{cor}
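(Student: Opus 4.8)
The plan is to deduce the corollary directly from Proposition \ref{4sp1} by specializing the indexing set. First I would observe that the indexing set $\mathcal{A}$ constructed in the proof of Proposition \ref{4sp1} consists of one representative from each right coset of $\mathcal{Z}(S)$ in $S$ other than $\mathcal{Z}(S)$ itself; since $S/\mathcal{Z}(S)$ has $k$ elements (one of which is the coset $\mathcal{Z}(S)$), the set $\mathcal{A}$ has exactly $k-1$ elements. Here I am using that $S/\mathcal{Z}(S)$ is a genuine right loop whose underlying set is the set of right cosets $\{\mathcal{Z}(S)\circ x \mid x \in S\}$, so that $|S/\mathcal{Z}(S)|=k$ literally counts these cosets and the decomposition $S=\sqcup_{x_i\in\mathcal{A}}(\mathcal{Z}(S)\circ x_i)\sqcup \mathcal{Z}(S)$ from that proof is a partition into $k$ blocks.

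Next I would simply invoke the conclusion of Proposition \ref{4sp1}: $Ker\,\theta$ embeds as a subgroup of $\prod_{\mathcal{A}}\mathcal{Z}(S)$. Since $|\mathcal{A}|=k-1$ is finite, this product is the finite direct product $\mathcal{Z}(S)\times\cdots\times\mathcal{Z}(S)$ taken $k-1$ times, which is the stated abelian group. This gives the corollary with essentially no extra work beyond the cardinality count.

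Since this is a finite specialization of an already-proved proposition, there is no real obstacle; the only point requiring a moment's care is confirming that the blocks in the coset decomposition really are $k$ in number, i.e.\ that $\mathcal{Z}(S)$ is among the cosets counted by $|S/\mathcal{Z}(S)|=k$ and that it is excluded from $\mathcal{A}$, so that $|\mathcal{A}|=k-1$ rather than $k$. One might also remark, for completeness, that $\mathcal{Z}(S)$ being finite (as a subset of the finite set $S$) ensures the ambient product is a finite abelian group, so ``subgroup of'' is unambiguous. I would keep the proof to two or three sentences: state that $|\mathcal{A}|=k-1$ by the coset count, then apply Proposition \ref{4sp1}.
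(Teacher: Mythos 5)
Your proposal is correct and is exactly the intended argument: the paper offers no separate proof precisely because the corollary is the specialization of Proposition \ref{4sp1} in which the indexing set $\mathcal{A}$ (one representative per coset of $\mathcal{Z}(S)$ other than $\mathcal{Z}(S)$ itself) has $k-1$ elements. Your care in checking that $\mathcal{Z}(S)$ is counted among the $k$ cosets but excluded from $\mathcal{A}$ is the only point of substance, and you handle it correctly.
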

%{\Large{I think Last line should be ------Thus $h \in Ker \theta$ is completely determined by $h(x_i)$ $(1 \leq i \leq k-1)$. Therefore, it defines a map $\eta: Ker \theta \rightarrow \mathcal{Z}(S) \times \cdots \times \mathcal{Z}(S)$ ($k-1$ times) by $\eta(h)=(z_1,\cdots,z_{k-1})$. One can check that $\eta$ is injective homomorphism where $h(x_i)=z_i \circ x_i$}}

\noindent Let $S$ be a nilpotent right loop with central series \begin{equation}\label{eq1} \{1\}=\mathcal{Z}_0 \leq \mathcal{Z}_1 \leq \cdots \leq \mathcal{Z}_n=S. \end{equation}
  
 Let $\theta_j : G_S \rightarrow G_{S/\mathcal{Z}_j}$ $(0 \leq j \leq n-1)$ be onto homomorphism induced by the natural projection $\nu_j : S \rightarrow S/\mathcal{Z}_j$. Then this will give a series \[\{1\}=Ker \theta_0 \leq \cdots \leq Ker \theta_{n-1}=G_S.\]
 
 Let $\theta : G_{S/\mathcal{Z}_j} \rightarrow G_{(S/\mathcal{Z}_j)/(\mathcal{Z}_{j+1}/\mathcal{Z}_j)}$ be onto homomorphism induced by the natural projection $\nu: S/\mathcal{Z}_j \rightarrow (S/\mathcal{Z}_j)/(\mathcal{Z}_{j+1}/\mathcal{Z}_j)$. By Proposition \ref{4sp1}, $Ker \theta$ is isomorphic to a subgroup of $\prod_{\mathcal B}\mathcal{Z}_{j+1}/\mathcal{Z}_j $ for some indexing set ${\mathcal B}$.
 
We now observe that each member of $Ker \theta_{j+1}/Ker \theta_j$ induces a member of $Ker \theta$. For this, we will see the action of an element of $Ker \theta_{j+1}/Ker \theta_j$ on the elements of $(S/\mathcal{Z}_j)/(\mathcal{Z}_{j+1}/\mathcal{Z}_j)$.  
 
 Let $h_{j+1}Ker \theta_j \in Ker \theta_{j+1}/Ker \theta_j$, where $h_{j+1} \in Ker \theta_{j+1}$ and $(\mathcal{Z}_{j+1}/\mathcal{Z}_j) \circ (\mathcal{Z}_j \circ x) \in (S/\mathcal{Z}_j)/(\mathcal{Z}_{j+1}/\mathcal{Z}_j)$. By the definition of $\theta_j$, each element of $Ker \theta_j$ acts trivially on the cosets of $\mathcal{Z}_j$. Since $G_{S/\mathcal{Z}_{j+1}} \cong G_{(S/\mathcal{Z}_j)/(\mathcal{Z}_{j+1}/\mathcal{Z}_j)}$, by definition of $\theta_{j+1}$, $h_{j+1}$ also acts trivially on $(\mathcal{Z}_{j+1}/\mathcal{Z}_j) \circ (\mathcal{Z}_j \circ x)$. Thus, we have proved following:
 
 \begin{prop}\label{4sp2}
Let $S$ be a nilpotent right loop with central series \ref{eq1}. Then there exists a series \[\{1\}=Ker \theta_0 \leq \cdots \leq Ker \theta_{n-1}=G_S\] such that $Ker \theta_{j+1}/Ker \theta_j$ is isomorphic to a subgroup of $\prod_{\mathcal B}\mathcal{Z}_{j+1}/\mathcal{Z}_j $ for some indexing set ${\mathcal B}$.
 \end{prop}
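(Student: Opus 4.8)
The plan is to turn the stated chain $\{1\}=Ker\,\theta_0\le\cdots\le Ker\,\theta_{n-1}=G_S$ into an honest ascending series with the advertised quotients, by relating $\theta_{j+1}$ to $\theta_j$ through the canonical factorization of the projection $\nu_{j+1}$. First I would dispose of the two endpoints. Since $\mathcal Z_0=\{1\}$, the map $\nu_0$ is the identity, so $\theta_0=\mathrm{id}_{G_S}$ and $Ker\,\theta_0=\{1\}$. At the other end, $\mathcal Z_n=S$ together with $\mathcal Z_n/\mathcal Z_{n-1}=\mathcal Z(S/\mathcal Z_{n-1})$ forces $S/\mathcal Z_{n-1}$ to coincide with its own centre; since the centre of a right loop is an abelian group, $S/\mathcal Z_{n-1}$ is an abelian group and hence $G_{S/\mathcal Z_{n-1}}=\{1\}$, so $Ker\,\theta_{n-1}=G_S$.

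Next, fix $j$ with $0\le j\le n-2$. The key point is that $\mathcal Z_{j+1}/\mathcal Z_j=\mathcal Z(S/\mathcal Z_j)$ is an invariant right subloop of $S/\mathcal Z_j$, so there is a natural onto homomorphism $\mu:S/\mathcal Z_j\to (S/\mathcal Z_j)/(\mathcal Z_{j+1}/\mathcal Z_j)$, whose target is canonically identified with $S/\mathcal Z_{j+1}$ (matching $(\mathcal Z_{j+1}/\mathcal Z_j)\circ(\mathcal Z_j\circ x)$ with $\mathcal Z_{j+1}\circ x$), and under this identification $\mu\circ\nu_j=\nu_{j+1}$. Applying the functoriality of $S\mapsto G_S$ from Lemma \ref{lem} to this factorization gives $\theta_{j+1}=\theta\circ\theta_j$, where $\theta:G_{S/\mathcal Z_j}\to G_{S/\mathcal Z_{j+1}}$ is the homomorphism induced by $\mu$. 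From $\theta_{j+1}=\theta\circ\theta_j$ one reads off $Ker\,\theta_j\le Ker\,\theta_{j+1}$, so the chain is genuinely ascending, and — $\theta_j$ being onto — the first isomorphism theorem yields $Ker\,\theta_{j+1}/Ker\,\theta_j\cong Ker\,\theta$.

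It then remains only to identify $Ker\,\theta$. But $\theta$ is exactly the homomorphism of Proposition \ref{4sp1} applied to the right loop $S/\mathcal Z_j$, whose centre is $\mathcal Z(S/\mathcal Z_j)=\mathcal Z_{j+1}/\mathcal Z_j$; hence that proposition provides an injective homomorphism of $Ker\,\theta$ into $\prod_{\mathcal B}\mathcal Z_{j+1}/\mathcal Z_j$ for a suitable indexing set $\mathcal B$ (a transversal of $\mathcal Z_{j+1}/\mathcal Z_j$ in $S/\mathcal Z_j$ with the identity coset removed). Composing with the isomorphism of the previous paragraph embeds $Ker\,\theta_{j+1}/Ker\,\theta_j$ into $\prod_{\mathcal B}\mathcal Z_{j+1}/\mathcal Z_j$, which is the assertion.

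I expect the only real friction to be the elementary-but-fussy verification of the third-isomorphism-type statement $(S/\mathcal Z_j)/(\mathcal Z_{j+1}/\mathcal Z_j)\cong S/\mathcal Z_{j+1}$ together with the compatibility $\mu\circ\nu_j=\nu_{j+1}$, plus confirming that $\mathcal Z(S/\mathcal Z_j)$ really is $\mathcal Z_{j+1}/\mathcal Z_j$ so that Proposition \ref{4sp1} applies verbatim; once functoriality (Lemma \ref{lem}) is in hand, the rest is routine diagram chasing. A slicker alternative that avoids the third-isomorphism bookkeeping is to argue directly: an element of $Ker\,\theta_j$ acts trivially on the cosets of $\mathcal Z_j$, hence a fortiori on those of $\mathcal Z_{j+1}$, giving $Ker\,\theta_j\le Ker\,\theta_{j+1}$ immediately; and a representative $h_{j+1}$ of a class in $Ker\,\theta_{j+1}/Ker\,\theta_j$ acts trivially on cosets of $\mathcal Z_{j+1}$, so under $G_{S/\mathcal Z_{j+1}}\cong G_{(S/\mathcal Z_j)/(\mathcal Z_{j+1}/\mathcal Z_j)}$ the element it induces lies in $Ker\,\theta$, and one finishes by quoting Proposition \ref{4sp1}.
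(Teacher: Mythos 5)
Your argument is correct and follows essentially the same route as the paper: factor $\nu_{j+1}$ through $\nu_j$ via the identification $(S/\mathcal{Z}_j)/(\mathcal{Z}_{j+1}/\mathcal{Z}_j)\cong S/\mathcal{Z}_{j+1}$, use the induced map $\theta$ on group torsions to see $Ker\,\theta_j\leq Ker\,\theta_{j+1}$ with $Ker\,\theta_{j+1}/Ker\,\theta_j$ landing in $Ker\,\theta$, and apply Proposition \ref{4sp1} to $S/\mathcal{Z}_j$ whose centre is $\mathcal{Z}_{j+1}/\mathcal{Z}_j$. You are somewhat more explicit than the paper about the endpoint verifications and the first-isomorphism-theorem step (your ``slicker alternative'' at the end is literally the paper's own argument), but there is no substantive difference.
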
 
 \begin{cor}
 Let $S$ be a nilpotent right loop. Then the group torsion $G_S$ is solvable group. 
 \end{cor}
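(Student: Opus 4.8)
The plan is to derive solvability of $G_S$ directly from the series constructed in Proposition \ref{4sp2}, so the corollary is essentially a bookkeeping consequence of what precedes it. Since $S$ is nilpotent it has a central series $\{1\}=\mathcal{Z}_0 \leq \cdots \leq \mathcal{Z}_n=S$ as in \eqref{eq1}, and Proposition \ref{4sp2} supplies the chain
\[\{1\}=Ker \theta_0 \leq \cdots \leq Ker \theta_{n-1}=G_S\]
of subgroups of $G_S$ with $Ker \theta_{j+1}/Ker \theta_j$ isomorphic to a subgroup of $\prod_{\mathcal B}\mathcal{Z}_{j+1}/\mathcal{Z}_j$ for a suitable indexing set ${\mathcal B}$.

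First I would check that this chain is a (sub)normal series: each $Ker \theta_j$ is the kernel of the homomorphism $\theta_j : G_S \to G_{S/\mathcal{Z}_j}$, hence normal in $G_S$ and in particular normal in $Ker \theta_{j+1}$. Next I would check that the factors are abelian: by the discussion preceding Definition \ref{nr}, $\mathcal{Z}_{j+1}/\mathcal{Z}_j=\mathcal{Z}(S/\mathcal{Z}_j)$ is an abelian group, so any direct product of copies of it is abelian, and hence so is its subgroup $Ker \theta_{j+1}/Ker \theta_j$. A group possessing a subnormal series of finite length with abelian factors is solvable, so $G_S$ is solvable, which is the assertion.

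There is no real obstacle here; all the substantive work was carried out in Propositions \ref{4sp1} and \ref{4sp2}. The only points demanding a moment's care are confirming that the chain is genuinely subnormal (immediate from the kernels being normal in $G_S$) and that the factors are abelian (which reduces to the already-cited fact that the center of a right loop is an abelian group); once these are noted, the standard criterion for solvability finishes the proof.
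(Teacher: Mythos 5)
Your proof is correct and follows exactly the paper's own argument: the paper's proof of this corollary is the one-line observation that Proposition \ref{4sp2} turns the central series of $S$ into a series of $G_S$ with abelian quotients. Your additional checks (normality of the kernels, abelianness of the factors via the abelianness of $\mathcal{Z}_{j+1}/\mathcal{Z}_j$) are just the details the paper leaves implicit.
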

\begin{proof}
By Proposition \ref{4sp2}, central series of $S$ gives a series  of $G_S$ with abelian quotient. 
\end{proof}
\begin{cor}
If a group $G$ has a nilpotent generating transversal with respect to a core-free subgroup $H$, then $H$ is solvable. 
\end{cor}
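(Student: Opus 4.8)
The plan is to combine the two main structural results already established: Theorem \ref{1} (a group with a solvable generating transversal over a core-free subgroup is solvable) together with the fact, just proved, that a nilpotent right loop has solvable group torsion $G_S$. Let $G$ be a group with a core-free subgroup $H$ and a nilpotent generating transversal $S$ of $H$ in $G$. By \cite[Proposition 3.10, p. 80]{rltr}, since $S$ generates $G$ and $H$ is core-free, we may identify $G$ with $G_SS$ and $H$ with $G_S$. Thus it suffices to show that $G_S$ is solvable under the hypothesis that $S$ is a nilpotent right loop; but this is exactly the content of the Corollary immediately preceding this one (``Let $S$ be a nilpotent right loop. Then the group torsion $G_S$ is solvable group''). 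Hence $H \cong G_S$ is solvable.

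In more detail, the key steps in order are: first, invoke Cameron's theorem and \cite[Proposition 3.10, p. 80]{rltr} to pass from the abstract pair $(G,H)$ to the concrete model $(G_SS, G_S)$ with the induced right loop structure on $S$ matching the given one (Lemma \ref{l4} guarantees the two descriptions of the action agree). Second, feed the nilpotent central series \eqref{eq1} of $S$ into Proposition \ref{4sp2} to obtain the series $\{1\} = Ker\,\theta_0 \leq \cdots \leq Ker\,\theta_{n-1} = G_S$ whose successive quotients embed in the abelian groups $\prod_{\mathcal B}\mathcal{Z}_{j+1}/\mathcal{Z}_j$. Third, conclude that $G_S$ is solvable (each quotient abelian, finite-length series), and therefore $H$ is solvable.

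I do not expect any serious obstacle here: the statement is a direct corollary, essentially a repackaging of the preceding corollary via the identification $H \cong G_S$. The only point requiring a line of care is checking that the right loop structure on $S$ coming from its being a generating transversal of $H$ in $G$ is the same one used in the corollary about $G_S$ — i.e. that ``nilpotent generating transversal'' in the hypothesis means precisely ``$S$ is a nilpotent right loop'' — but this is immediate from the definition of nilpotent transversal given just after Definition \ref{nr} and from \cite[Proposition 3.10, p. 80]{rltr}. One might also remark, as the authors note, that this generalizes the result of \cite{rhb}; no extra argument is needed for that observation.
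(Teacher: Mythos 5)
Your proof is correct and follows exactly the route the paper intends: identify $H$ with $G_S$ via \cite[Proposition 3.10, p. 80]{rltr} (using that $S$ generates $G$ and $H$ is core-free), then apply the immediately preceding corollary that the group torsion of a nilpotent right loop is solvable. No gaps; this matches the paper's (implicit) argument.
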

\begin{cor}
Let $S$ be a nilpotent generating transversal with respect to a core-free subgroup $H$ of a finite group $G$ such that $|S|=p^n$ for some prime $p$ and $n \in \mathbb{N}$. Then both $H$ and $G$ are $p$-groups.
\end{cor}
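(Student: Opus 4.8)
The plan is to reduce the statement to showing that the group torsion $G_S$ is a $p$-group, and then to read off the conclusion from the structural description of $G_S$ obtained in Proposition \ref{4sp2}. Since $H$ is core-free and $S$ is a generating transversal of $H$ in $G$, we have $G \cong G_S S$ with $H \cong G_S$ by \cite[Proposition 3.10, p. 80]{rltr}, and since $S$ is a transversal of $G_S$ in $G_S S$ this gives $|G| = |G_S S| = |G_S|\,|S| = |G_S|\,p^n$. Hence it suffices to prove that $G_S$ is a $p$-group: this immediately yields that $H \cong G_S$ is a $p$-group and that $|G|$ is a power of $p$.

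Before invoking Proposition \ref{4sp2} I would record an elementary Lagrange-type fact for invariant right subloops: if $T$ is an invariant right subloop of a finite right loop $S$, the classes of the associated congruence (Lemma \ref{rps1}) are exactly the cosets $T \circ x$, each of cardinality $|T|$ because right multiplication by $x$ is a bijection of $S$; hence $|S| = |T|\,|S/T|$. Applying this along the central series $\{1\} = \mathcal{Z}_0 \leq \mathcal{Z}_1 \leq \cdots \leq \mathcal{Z}_n = S$, each $|\mathcal{Z}_j|$ divides $|S| = p^n$, so every factor $\mathcal{Z}_{j+1}/\mathcal{Z}_j = \mathcal{Z}(S/\mathcal{Z}_j)$ is a finite abelian $p$-group (it is an abelian group by the remark preceding Definition \ref{nr}, and its order is $|\mathcal{Z}_{j+1}|/|\mathcal{Z}_j|$).

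Now I would apply Proposition \ref{4sp2}: there is a series $\{1\} = Ker \theta_0 \leq \cdots \leq Ker \theta_{n-1} = G_S$ with $Ker \theta_{j+1}/Ker \theta_j$ isomorphic to a subgroup of $\prod_{\mathcal B} \mathcal{Z}_{j+1}/\mathcal{Z}_j$. Because $S$ is finite, the indexing set $\mathcal B$ (one representative per nontrivial coset of $\mathcal{Z}_{j+1}/\mathcal{Z}_j$ in $S/\mathcal{Z}_j$) is finite, so this product is a finite abelian $p$-group and therefore so is each factor $Ker \theta_{j+1}/Ker \theta_j$. Each $Ker \theta_j$ is a kernel, hence normal in $G_S$ and in particular normal in $Ker \theta_{j+1}$, so $|G_S| = \prod_{j=0}^{n-1} |Ker \theta_{j+1}/Ker \theta_j|$ is a power of $p$. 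Thus $G_S$ is a $p$-group, and by the first paragraph both $H$ and $G$ are $p$-groups.

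I do not expect a real obstacle here: the argument is essentially bookkeeping with the series already constructed. The only point needing a little care is the finiteness of the index sets $\mathcal B$ occurring in Proposition \ref{4sp2} (so that the products $\prod_{\mathcal B} \mathcal{Z}_{j+1}/\mathcal{Z}_j$ are genuinely finite $p$-groups), which follows from finiteness of $S$ together with the Lagrange-type counting for invariant right subloops noted above.
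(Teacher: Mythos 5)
Your argument is correct and is essentially the route the paper intends: the corollary is stated without proof, but the surrounding text makes clear it is meant to follow from Proposition \ref{4sp2} exactly as you do, by noting that each factor $Ker\,\theta_{j+1}/Ker\,\theta_j$ embeds in a finite product of the $p$-groups $\mathcal{Z}_{j+1}/\mathcal{Z}_j$ and then using $|G|=|G_S|\,|S|$. Your explicit Lagrange-type counting for invariant right subloops, which justifies that the central factors and the index sets $\mathcal B$ are finite $p$-objects, is a worthwhile detail the paper leaves implicit.
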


\section{Some Examples}
In this section, we will observe some examples and counterexamples. We have seen that concepts of solvability and nilpotencey of right loop can be transferred in term of a  generating transversal of a core-free subgroup of a group. There are examples of groups where no subgroup is core-free. Following is an example of such a group:

\begin{example}\label{ex2}
Consider the group $G= \langle x_1, x_2, x_3, x_4 | x_1^{p^n}= x_2^{p^3}= x_3^{p^2}= x_4^{p^2}= 1, [x_1, x_2] = x_2^{p^2},[x_1, x_3] = x_3^p, [x_1, x_4] = x_4^p, [x_2, x_3] = x_1^{p^{n-1}},[x_2, x_4] = x_2^{p^2}, [x_3, x_4] = x_4^p \rangle $ where $p$ is an odd prime and $n$ is the natural number greater than $2$. The above example has been taken from \cite{mv}. This is a nilpotent group of class 2 having no core-free subgroup.
\end{example}

 We now observe that a solvable right loop which is not a group need not be nilpotent right loop.
 
 \begin{example}
 Let $G=Alt(4)$, the alternating group of degree $4$ and $H=\{I,(1,2)(3,4)\}$, where $I$ denotes the identity permutation. Consider a right transversal $S=\{I,(1,3)(2,4),(1,2,3),(1,3,2),(2,3,4),(1,3,4)\}$ of $H$ in $G$. Note that $\langle S \rangle =G$ and $H$ is core-free. Then $G_SS \cong G$ and $G_S \cong H$. Also note that $S \cap N_G(H)=\{I,(1,3)(2,4)\}$, where $N_G(H)$ denotes the normalizer of $H$ in $G$. By \cite[Proposition 3.3, p. 6]{vk}, $\mathcal{Z}(S) \subseteq S \cap N_G(H)$. Let $\circ$ be the induced binary operation on $S$ as defined in the Section $1$. Observe that $(1,3)(2,4) \circ (1,3,4) \neq (1,3,4) \circ (1,3)(2,4)$. This implies that $\mathcal{Z}(S)=\{I\}$. Hence $S$ can not be nilpotent.
 
 Now by Lemma \ref{a}, $S/(S \cap N_G(H))$ is isomorphic to the cyclic group of order $3$. This implies that $S$ is solvable. 
 \end{example}
 Now, we observe that unlike the group the right loop of prime power order need not be nilpotent.
 
\begin{example}
Let $G =$ $$\langle (1,3)(2,4)(5,7,6,8),(1,4)(2,3)(5,8,6,7),(1,5)(2,6)(3,7)(4,8)\rangle \leq \text{Sym}(8),$$ where $\text{Sym}(n)$ denotes the symmetric group of degree $n$. Let $H$ be the stabilizer of $1$ in $G$. Consider $S=\{I,(1,2)(3,4),$ $(1,3)(2,4)(5,7,6,8),$ $(1,4)(2,3)$ $(5,8,6,7),$ $(1,5)(2,6)(3,7)(4,8),$ $(1,6)(2,5)(3,8)(4,7),$ $(1,7)(2,8)(3,6,4,5),$ $(1,8)(2,7)(3,5,4,6)\}$. Clearly $S$ is right transversal of $H$ in $G$. Note that the center $Z(G)=\{I,(1,2)(3,4)(5,6)(7,8)\}$ and $N_G(H)=HZ(G)$. Since $H$ is core-free and $\langle S \rangle=G$, $G \cong G_SS$ and $H \cong G_S$. Observe that $S \cap N_G(H)=\{I,(1,2)(3,4)\}$. By \cite[Proposition 3.3, p. 6]{vk}, $\mathcal{Z}(S) \subseteq S \cap N_G(H)$. Let $\circ$ be the induced binary operation on $S$ as defined in the section $1$. Observe that $(1,2)(3,4) \circ (1,5)(2,6)(3,7)(4,8) \neq (1,5)(2,6)(3,7)(4,8) \circ (1,2)(3,4)$. This implies that $\mathcal{Z}(S)=\{I\}$. Hence $S$ can not be nilpotent.
 
\end{example} 
Next, we will show that there are core-free subgroups of a nilpotent group which has none of its generating transversals nilpotent. But before proceeding to further examples, we need to prove following lemmas.
\begin{lem}\label{lem3} 
Suppose that $G$ is a nilpotent group of class $2$, $H$ is a core-free subgroup of $G$ and $S$ is generating transversal of $H$ in $G$. Then $\mathcal{Z}(G) \cap S=\mathcal{Z}(S)$.
\end{lem}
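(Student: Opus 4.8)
The plan is to prove the two inclusions $\mathcal{Z}(S)\subseteq\mathcal{Z}(G)\cap S$ and $\mathcal{Z}(G)\cap S\subseteq\mathcal{Z}(S)$ separately; the class-$2$ hypothesis is needed only for the first, and core-freeness (in the strong form below) only for the second. Throughout I write $G=HS$, identify $G$ with $G_{S}S$ and $H$ with $G_{S}$, and use the cocycle $f(a,b)=ab(a\circ b)^{-1}\in H$. The key preliminary observation is that $\mathcal{Z}(G)\cap H=\{1\}$: this is a central, hence normal, subgroup of $G$ contained in the core-free subgroup $H$, so it is trivial. More generally, if $x\in\mathcal{Z}(G)$ then every element of $\langle x\rangle\cap H$ is central and lies in $H$, hence in every conjugate of $H$, so $\langle x\rangle\cap H=\{1\}$.

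For $\mathcal{Z}(S)\subseteq\mathcal{Z}(G)\cap S$, let $x\in\mathcal{Z}(S)\subseteq S$. By \cite[Proposition 3.4, p. 6]{vk} we have $x\circ s=s\circ x$ for all $s\in S$, hence $f(x,s)f(s,x)^{-1}=xs(x\circ s)^{-1}(s\circ x)(sx)^{-1}=xsx^{-1}s^{-1}$. The left-hand side lies in $H$, while the right-hand side is a commutator, hence lies in $\mathcal{Z}(G)$ because $G$ has class $2$. Thus $xsx^{-1}s^{-1}\in\mathcal{Z}(G)\cap H=\{1\}$, so $x$ commutes with every $s\in S$, and since $S$ generates $G$ we conclude $x\in\mathcal{Z}(G)$.

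For $\mathcal{Z}(G)\cap S\subseteq\mathcal{Z}(S)$, let $x\in\mathcal{Z}(G)\cap S$, $x\neq1$, and let $m\in\mathbb{N}\cup\{\infty\}$ be its order. Since $\langle x\rangle\cap H=\{1\}$, the cosets $Hx^{i}$ are pairwise distinct; writing $x_{i}$ for the unique element of $S\cap Hx^{i}$ gives a subset $T_{x}=\{x_{i}\}$ of $S$ with $x_{0}=1$ and $x_{1}=x$. Using that $x$ is central one obtains the composition rule $x_{i}\circ x_{j}=x_{i+j}$ (indices mod $m$), so $T_{x}$ is a cyclic right subloop, and one also checks that $w\mapsto x_{i}\circ w$ is a bijection of $S$ (this uses $x^{-i}Hx^{i}=H$, again because $x$ is central). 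Checking reflexivity, symmetry, transitivity and closure under $\circ$ directly from the composition rule, one sees that $R_{x}=\{(x_{i}\circ y,\,y)\mid i,\ y\in S\}$ is a congruence on $S$, so by Lemma \ref{rps1} $T_{x}$ is an invariant right subloop. It then remains to show that $R_{x}$ is a \emph{central} congruence: I would take the centering congruence on $R_{x}$ to be ``equality of the exponent $i$'' and verify that, with $\gamma=S\times S$, conditions (i)--(v) of Definition \ref{6c2sd1} all follow from the facts that exponents add (under $\circ$ and under the operation of $R_{x}$) and that the bijection required in (ii) is the projection $w\mapsto x_{i}\circ w$. Then $R_{x}\subseteq\zeta(S)$, so $x\in(R_{x})_{1}=T_{x}\subseteq\mathcal{Z}(S)$.

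The main obstacle, and where I expect the real work to lie, is this last verification: confirming that ``equal exponent'' is genuinely a centering congruence on the right loop $R_{x}$ in the precise sense of Definition \ref{6c2sd1}, in particular that it is itself a congruence on $R_{x}$ and that the bijection in (ii) is the claimed projection. Every step there rests on $x$ being central in $G$, not merely in $S$ (so that powers of $x$ commute past all elements and normalize $H$). Conversely, in the first inclusion it is exactly the class-$2$ hypothesis that pushes the commutator $xsx^{-1}s^{-1}$ into $\mathcal{Z}(G)$, and this cannot be dropped, since a loop-central $x$ need only normalize $H$, not centralize it.
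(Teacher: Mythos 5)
Your first inclusion, $\mathcal{Z}(S)\subseteq\mathcal{Z}(G)\cap S$, is exactly the paper's argument: from $x\circ s=s\circ x$ one gets $xsx^{-1}s^{-1}\in H$, the class-$2$ hypothesis puts this commutator in $\mathcal{Z}(G)$, and $\mathcal{Z}(G)\cap H$ is a normal subgroup of $G$ contained in the core-free subgroup $H$, hence trivial; since $S$ generates $G$, $x\in\mathcal{Z}(G)$. That half is correct. (For the record, the paper disposes of the reverse inclusion with the words ``converse is obvious'' and offers no construction at all.)

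The genuine gap is in your proof of $\mathcal{Z}(G)\cap S\subseteq\mathcal{Z}(S)$, and it occurs \emph{before} the step you flagged as the hard one: the relation $R_{x}=\{(x_{i}\circ y,\,y)\}$ is in general not closed under the componentwise operation on $S\times S$, so it is not a congruence and Lemma \ref{rps1} does not make $T_{x}$ invariant. Indeed, writing $x_{j}\circ y'=c\,x^{j}y'$ with $c\in H$, the element $(x_{i}\circ y)\circ(x_{j}\circ y')$ lies in the coset $Hx^{i+j}ycy'$, while $x_{k}\circ(y\circ y')$ lies in $Hx^{k}yy'$; these agree for some $k$ only if $ycy^{-1}\in H\langle x\rangle$, and since $ycy^{-1}=c\,[c,y^{-1}]$ with $[c,y^{-1}]$ central, this amounts to $[H,G]\subseteq H\langle x\rangle$, which fails in general. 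Concretely, let $G$ be the exponent-$p$ class-$2$ group of order $p^{5}$ generated by $a,b,d$ with $[a,b]=z_{1}$, $[a,d]=z_{2}$, $[b,d]=1$, let $H=\langle a\rangle$ (core-free), let $S=\{b^{j}d^{k}z_{1}^{l}z_{2}^{m}\}$ with $b$ replaced by $ab$ (so $S$ generates $G$), and take $x=z_{1}\in\mathcal{Z}(G)\cap S$. Then $(d,d)$ and $(z_{1}\circ(ab),ab)=(bz_{1},ab)$ lie in $R_{z_{1}}$, but their product is $(d\circ(bz_{1}),\,d\circ(ab))=(bdz_{1},\,bdz_{2}^{-1})$, and $(bdz_{1})(bdz_{2}^{-1})^{-1}=z_{1}z_{2}\notin H\langle z_{1}\rangle=\langle a,z_{1}\rangle$, so the product is not in $R_{z_{1}}$. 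The subloop you should be working with is the larger $S\cap H\mathcal{Z}(G)$ (or $S\cap H\langle x\rangle[G,G]$): this one is invariant because $H\mathcal{Z}(G)$ is a normal subgroup of $G$ containing $H$ (Lemma \ref{a}) --- and note that this normality again uses class $2$, contradicting your opening claim that the class-$2$ hypothesis is needed only for the first inclusion. Even after that repair you would still owe the verification that the resulting congruence is central in the sense of Definition \ref{6c2sd1}, which you yourself acknowledge is only sketched; as written, neither stage of your second inclusion is established.
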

\begin{proof}
Take $x \in \mathcal{Z}(S)$. Then $x \circ y= y \circ x$ for all $y \in S$. This implies $xyx^{-1}y^{-1} \in H$ for all $y \in S$. Since group is nilpotent of class 2, so all commutators are central. For $H$ is core-free so $H$ will not contain any commutator element. This implies $xyx^{-1}y^{-1}=1$ or $xy=yx$ for all $ y \in S$. This proves that $ \mathcal{Z}(S) \subseteq \mathcal{Z}(G) \cap S$ (for $S$ generates $G$). Converse is obvious. This proves the lemma. 
\end{proof}

\begin{lem}\label{lem4}
For some prime $p$, suppose that $G$ is a $p$-group of nilpotent class $2$, $H$ is a core-free subgroup of $G$ and $S$ is generating transversal of $H$ in $G$. Then $\mathcal{Z}(G) \cap \Phi (G) \cap S=\{1\}$ where $\Phi (G)$ is the Frattini subgroup of $G$.
\end{lem}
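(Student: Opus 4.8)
The proof I have in mind is by contradiction: assume there is an element $x \in \mathcal{Z}(G) \cap \Phi(G) \cap S$ with $x \neq 1$, and derive a contradiction with the core-freeness of $H$. First I would apply Lemma \ref{lem3}: since $x \in \mathcal{Z}(G) \cap S$, it lies in $\mathcal{Z}(S)$, and in particular $x$ is a nontrivial element of $\mathcal{Z}(G)$. Hence $\langle x \rangle$ is a nontrivial central, so normal, subgroup of $G$; and since every subgroup of the central subgroup $\langle x \rangle$ is again normal in $G$ while $H$ is core-free, we must have $\langle x \rangle \cap H = \{1\}$. Thus $H\langle x \rangle$ is a subgroup of $G$ of order $|H|\,|\langle x \rangle|$, in which $S \cap H\langle x \rangle$ serves as a transversal of $H$.

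The next ingredient is the faithful transitive action of $G$ on the coset space $G/H$, which is faithful precisely because $\mathrm{core}(H) = \{1\}$. Under this action $x$, being central, nontrivial and fixing no coset (as $\langle x\rangle \cap H = \{1\}$), acts as a fixed-point-free, semiregular permutation, so that its orbits on $G/H$ form a block system with blocks of size $|x|$; it is also worth keeping in mind the unique subgroup of $\langle x\rangle$ of order $p$, which is then a minimal normal subgroup of $G$ lying inside $\Phi(G)$. This is the configuration I would aim to rule out.

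The decisive — and, I expect, the hardest — step is to extract a contradiction from the hypothesis $x \in \Phi(G)$. Since the Frattini subgroup consists of non-generators and $G = \langle S \rangle$ with $x \in S$, the set $S \setminus \{x\}$ still generates $G$. The aim is then to combine this redundancy with the fact that $G$ is a $p$-group (so that every proper subgroup is contained in a normal subgroup of index $p$, which in turn contains $\Phi(G)$) and of class $2$ (so that $[G,G] \leq \mathcal{Z}(G)$ and $\Phi(G) = G^{p}[G,G]$, which constrains how $x$ can be written through the remaining elements of $S$) in order to force $x \in H$, whence $x = 1$ because $S \cap H = \{1\}$ — the desired contradiction; equivalently, one wants to exhibit $\langle x\rangle$ inside $\mathrm{core}(H)$. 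The subtle interplay to be managed here is between the redundancy of $x$ as a generator of $G$ and the fact that $x$ itself lies outside $H$ even though $\langle x\rangle$ meets $H$ trivially; bringing these two facts into collision is where the real content of the lemma lies, and I would organise the bookkeeping around the block system of the previous step together with Lemma \ref{lem3}.
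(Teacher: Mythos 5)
The opening reduction is fine and matches the paper: you assume $1\neq x\in \mathcal{Z}(G)\cap\Phi(G)\cap S$, invoke Lemma \ref{lem3} to place $x$ in $\mathcal{Z}(S)$, and correctly note that $\langle x\rangle\cap H=\{1\}$ and that $S\setminus\{x\}$ still generates $G$ because $x$ is a non-generator. But from that point on you do not prove anything: the ``decisive step'' is stated only as an aim (``the aim is then to combine this redundancy \dots in order to force $x\in H$''), and no mechanism is given for reaching that aim. The target itself is suspect --- $x$ is by construction a representative of a coset $Hx\neq H$, so ``forcing $x\in H$'' is not a contradiction you can expect to extract directly; and the block system coming from the semiregular action of $\langle x\rangle$ on $G/H$, while true, is never actually used. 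As written, the proposal is an accurate description of the hypotheses plus an acknowledgement that the hard part remains to be done.

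The ingredient you are missing is the one the paper's proof actually turns on: a counting argument on the center of a \emph{perturbed} transversal. Since $x\in\Phi(G)$, the set $S'=(S\setminus\{x\})\cup\{hx\}$ for any $1\neq h\in H$ is again a \emph{generating} transversal of $H$. Because $\mathcal{Z}(G)\cap H$ is normal in $G$ and $H$ is core-free, $hx\notin\mathcal{Z}(G)$, so Lemma \ref{lem3} applied to both $S$ and $S'$ gives $|\mathcal{Z}(S')|=|\mathcal{Z}(G)\cap S'|=|\mathcal{Z}(G)\cap S|-1=|\mathcal{Z}(S)|-1$. Both $\mathcal{Z}(S)$ and $\mathcal{Z}(S')$ are invariant right subloops, so both orders divide $|S|=[G:H]$, a power of $p$; two divisors of a $p$-power differing by $1$ (with the larger at least $p$) is impossible for odd $p$. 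Your observation that $S\setminus\{x\}$ generates $G$ is exactly what licenses the replacement $x\mapsto hx$, but without the step of comparing $|\mathcal{Z}(S)|$ and $|\mathcal{Z}(S')|$ against the divisibility constraint, the argument does not close.
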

\begin{proof}
Suppose that $1 \neq x \in \mathcal{Z}(G) \cap \Phi (G) \cap S$. Then by Lemma \ref{lem3}, $x \in \mathcal{Z}(S)$. Also $\mathcal{Z}(S)$ is an invariant right subloop, so $|\mathcal{Z}(S)|$ divides $|S|$. Consider $S^{\prime}=S\setminus \{x \} \cup \{hx\}$ for some $1\neq h \in H$. Note that $S^\prime$ also generates $G$. Then by Lemma \ref{lem3}, order of center of $S^\prime$ is one less than the order of center of $S$ and also   $|\mathcal{Z}(S^\prime)|$ divides $|S|$. This is not possible for order of $S$ is $p$ power. This proves the lemma.

\end{proof}

\begin{example}\label{ex1}
Consider the group $G= \langle x_1, x_2, x_3, x_4 | x_1^{p^n}= x_2^{p^2}= x_3^{p^2}= x_4^{p^4}= 1, [x_1, x_2] = x_2^{p},[x_1, x_3] = x_3^p, [x_1, x_4] = x_3^p, [x_2, x_3] = x_1^{p^{n-1}},[x_2, x_4] = x_2^p, [x_3, x_4] = 1 \rangle $ where $p$ is an odd prime. The above example has been taken from \cite{mv}. By the Lemma 2.1 of \cite{mv}, this group is nilpotent group of of class $2$ and its center and Frattini subgroup are equal. By Lemma \ref{lem3} and Lemma \ref{lem4}, it follows that center of each generating transversal is trivial. So none of the generating transversal is nilpotent.  

\end{example}

\vspace{0.2 cm}

\noindent \textbf{Acknowledgement:} Authors are thankful to Dr. R. P. Shukla, Department of Mathematics, University of Allahabad for the valuable discussion. First author is supported by UGC-BSR Research Start-up-Grant No. F.-2(20)/2012(BSR).

\end{document}